\theoremstyle{plain} 
\newtheorem{theorem}{Theorem}[section]
\newtheorem{proposition}[theorem]{Proposition}
\newtheorem{lemma}[theorem]{Lemma}
\newtheorem{corollary}[theorem]{Corollary} 
\theoremstyle{remark}
\theoremstyle{definition}
\newtheorem{definition}[theorem]{Definition}
\theoremstyle{remark}
\newtheorem{remark}[theorem]{Remark}
\DeclareMathOperator{\Stab}{Stab}
\DeclareMathOperator{\Orb}{Orb}
\DeclareMathOperator{\ch}{char}
\DeclareMathOperator{\Gal}{Gal}
\DeclareMathOperator{\tr}{tr}
\DeclareMathOperator{\Per}{Per}
\DeclareMathOperator{\Frob}{Frob}
\DeclareMathOperator{\FPP}{FPP}
\newcommand{\fp}{ {\mathfrak p} }
\newcommand{\fq}{ {\mathfrak q} }
\newcommand{\fP}{ {\mathfrak P} }
\newcommand{\cO}{ {\mathcal O} }
\newcommand{\bZ} { {\mathbb Z}}
\newcommand{\F} { {\mathbb F}} 
\newcommand{\bP} { {\mathbb P}} 
\newcommand{\bQ} { {\mathbb Q}}
\begin{document}

\author{Jamie Juul}

\title{The Image Size of Iterated Rational Maps over Finite Fields}

\maketitle

\begin{abstract}
Let $\varphi:\mathbb{P}^1(\mathbb F_q)\to\mathbb{P}^1(\mathbb F_q)$ be a rational map of degree $d>1$ on a fixed finite field.  We give asymptotic formulas for the size of image sets $\varphi^n(\mathbb{P}^1(\mathbb F_q))$ as a function of $n$. This is done using properties of Galois groups of iterated maps, whose connection to the size of image sets is established via the Chebotarev Density Theorem. We apply our results in the following setting. For a rational map defined over a number field, consider the reduction of the map modulo each prime of the number field. We use our results to give explicit bounds on the proportion of periodic points in the residue fields.
\end{abstract}

\section{Introduction}\label{Introduction}

Let $\varphi:\bP^1(\F_q)\to\bP^1(\F_q)$ be a rational map of degree $d>1$ on a fixed finite field and let $\varphi^n$ denote the $n$-th iterate of $\varphi$. We would like to consider the size of the \textbf{image sets} $\varphi^n\left(\bP^1(\F_q)\right) = \{\varphi^n(a) : a\in \bP^1(\F_q)\}$ as $n$ varies and the set of \textbf{periodic points} $\Per(\varphi) = \{a\in \bP^1(\F_q) : \varphi^k(a) = a \text{ for some } k> 0\}$. Image sets are also called \textbf{value sets} and are denoted $V_{\varphi^n}$ by some authors.  
The size of $\varphi^n\left(\bP^1(\F_q)\right)$ is eventually constant, as after a certain number of iterates only the periodic points remain in the image. In this paper, we address the question  of how fast this contraction occurs.

Many authors have investigated the question of the size of the image or value sets for polynomials $f(x)\in \F_q[x]$ which are not necessarily iterates, defined simply as $f(\F_q)=\{f(a):a\in\F_q\}$. Note, when the map is defined by a polynomial, one may simply work with $\F_q$ rather than $\bP^1(\F_q)$ since the point at infinity is fixed. Birch and Swinnerton-Dyer \cite{BS} proved for a degree $d>1$ polynomial $f(x)\in \F_q[x]$, if the Galois group of the splitting field of $f(x)-t$ over $\bar{\F}_q(t)$ is the full symmetric group $S_d$, then
 \begin{equation}\label{eqnBSD}
 \#f(\F_q)=\left(\sum_{k=1}^d\frac{(-1)^{k-1}}{k!}\right)q+O_d({q}^{1/2}),
 \end{equation}
  answering a question of Chowla \cite{chowla}.  Other results in this area have been proven in \cite{GCM, GW, Shao}.
  
  \begin{remark}
  	The big-O notation here is used to mean $\left|\#f(\F_q)-\left(\sum_{k=1}^d\frac{(-1)^{k-1}}{k!}\right)q\right|< Mq^{1/2}$, where $M$ is some constant depending on only $d$, and in particular is independent of $q$ and $f$. Further, this bound holds for all values of $q$.
  	
  	Throughout this paper big-O notation will be used in the same way.  We will use subscripts in big-O notation to denote dependence of the implied constant on the variables in the subscript. In particular, when big-O is used without a subscript, the implied constant is a fixed number (independent of $d, q, n$ and $\varphi$). The estimates hold for all values of $d$, $q$, and $n$, unless otherwise noted.
  \end{remark}

The connection of these problems to Galois theory is established via the Chebotarev density theorem. Specifically, if $t$ is transcendental over $\F_q$, $\#f(\F_q)=Cq + O_d(\sqrt{q})$, where $C$ is the proportion of elements in the Galois group of the splitting field of $f(x)-t$ over ${\F}_q(t)$ fixing some root of $f(x)-t$, provided the extension is \textbf{geometric}, that is, the splitting field of $f^n(x)-t$ does not contain a nontrivial algebraic extension of $\F_q$. The coefficient in Birch and Swinnerton-Dyer's result is precisely the proportion of elements of $S_d$ with a fixed point.  An analogous result holds for a generating coset of the Galois group of $f(x)-t$ over $\bar{\F}_q(t)$ in the Galois group of $f(x)-t$ over ${\F}_q(t)$ for the non-geometric case. This idea was also used by Cohen on work toward this and several related questions \cite{Cohen2}. Odoni used similar methods to study iterated polynomials, although he was looking at a different application \cite{odoni}. We build on some of Odoni's work here.

We fix the following notation. Let $\varphi(x)\in \F_q(x)$ be a rational function with degree $d>1$. Let $K_n=\F_q(\varphi^{-n}(t))$ and $K^*_n=\bar{\F}_q(\varphi^{-n}(t))=K_n\bar{\F}_q$, the splitting fields of $\varphi^n(x)-t$ over $\F_q(t)$ and $\bar{\F}_q(t)$ respectively. We assume $\varphi^n(x)-t$ is separable so that $K_n/\F_q(t)$ is a Galois extension. This is a generic condition that is easy to verify; in fact it suffices to check that $\varphi'(x)\neq 0$. We also introduce the following definition. 

\begin{definition} Let $\Gamma$ be a finite set acting on a set $X$. We define the \textbf{fixed point proportion} of  $\Gamma$, denoted $\FPP(\Gamma)$, to be the proportion of elements in $\Gamma$ fixing an element of $X$. 
\end{definition}

We prove a generalization of equation (\ref{eqnBSD}) for iterates of rational functions in Section \ref{BoundsImage}. In Theorem \ref{FPP} we show
\begin{equation}\label{eqngeneralBSD}
\#\varphi^n\left(\bP^1(\F_q)\right) = \epsilon_nq+O\left([K_n:\F_q(t)]dnq^{1/2}\right).
\end{equation}
 As in equation (\ref{eqnBSD}),  $\epsilon_n=\FPP\left(\Gal\left(K_n/{\F}_q(t)\right)\right)$ if $K_n\cap\bar{\F}_q=\F_q$. More generally $\epsilon_n=\FPP\left(\sigma\Gal\left(K^*_n/\bar{\F}_q(t)\right)\right)$ where $\sigma\Gal\left(K^*_n/\bar{\F}_q(t)\right)$ is a generator of the factor group  $\Gal\big(K_n/{\F}_q(t)\big)\big/\Gal\left(K^*_n/\bar{\F}_q(t)\right)$. Unlike in equation (\ref{eqnBSD}), we see explicitly how the error term depends on the degree of $\varphi^n$, rather than including this information in the implied constant.  We take advantage of the iterated structure of $\varphi^n$ to refine the error term. 

In Section \ref{indicatrix}, \ref{indicatrixforgps},  and \ref{results}, we study $\epsilon_n$ in the case $\Gal\left(K^*_n/\bar{\F}_q(t)\right)$ is an iterated wreath product. If $q$ is large relative to $n$, this occurs under very general conditions \cite{JO, JKMT, Pink1}. 

In Section \ref{indicatrix}, we define indicatrix polynomials, our main tool for studying fixed point proportions, and give a preliminary result. In Section \ref{indicatrixforgps}, we perform some fairly involved calculations to obtain precise bounds on $\epsilon_n$ when $\Gal\left(K_n/\F_q(t)\right) = [G]^n$ for $G = C_d, S_d, A_d,$ or $D_d$. Here $[G]^n$ denotes the $n$-fold iterated wreath product of the group $G$ with itself, $S_d$, $A_d$, and $C_d$ denote the symmetric, alternating, and cyclic groups acting on $d$ letters, and $D_d$ denotes the group of symmetries of a regular $d$-gon. 

In Section \ref{results}, we combine our work in Sections \ref{indicatrix} and \ref{indicatrixforgps} with equation (\ref{eqngeneralBSD}) to get our main results, which are bounds on the image size. If $\varphi:\bP^1(\F_q)\rightarrow \bP^1(\F_q)$ is not a bijection, we see that $\epsilon_n$ is defined by a recursive formula and there is a constant $c_\varphi$ such that $\epsilon_n = \frac{2}{n c_\varphi} + O_d\left(\frac{\log n}{n^2}\right)$ for $n\geq 2$, see Theorem \ref{cor1}. Then we use our calculations from Section \ref{indicatrixforgps} to prove Theorem \ref{imbounds}.
 We also give, in Corollary \ref{prop:orbits}, a bound on the number of iterates that can occur before the critical orbits must either collide or cycle for degree $d$ rational functions $\varphi(x)\in \F_q(x)$ that are not bijections.

%

\begin{theorem}\label{imbounds} Suppose $K_n$ is tamely ramified over $\F_q(t)$ and  $\Gal\left(K_n/\F_q(t)\right) \cong [G]^n$, where $G=C_d, S_d,$ or $A_d$ for any $d\geq 2$. Then \[\#\varphi^n\left(\bP^1(\F_q)\right)=\epsilon_n q+O\left(|G|^{d^n}q^{1/2}\right),\] 
	where $\epsilon_n=\FPP([G]^n)$. Moreover,  $\epsilon_n=\frac{2}{nc_G}+O\left(\frac{\log n}{n^2}\right)$ if $n\geq 2$, where $c_{S_d}=c_{A_d} = 1$ and $c_{C_d}=d-1$.
\end{theorem}

In the case of $D_d$, we give the value of $c_{D_d}$, but do not prove a result as strong as Theorem \ref{imbounds} for this case. We include this case to work toward an application given in Theorem \ref{theorem.periodicbd}.

 Theorem \ref{imbounds} supports the model of random maps, which says a general rational function should behave like a random map. In \cite[Theorem 2]{FO}, Flajolet and Odlyzko show for a random map on a set with $q$ elements, the $n$-th iterate should have image size asymptotic to $(1-\tau_n)q$ as $q\rightarrow \infty$, where $\tau_0=0$ and $\tau_{n+1}=e^{-1+\tau_n}$. It is not hard to see that $1-\tau_n$ is asymptotic to $\frac{2}{n}$ as $n$ approaches infinity. As mentioned above, for large $q$ the Galois group of the splitting field of $f^n(x)-t$ over $\F_q(t)$ will be isomorphic to $[S_d]^n$ under very general conditions. The estimate in Theorem \ref{imbounds} gives the same asymptotic behavior as the heuristic predicts in this case.

In Sections \ref{Examples} and \ref{BoundsPer}, we turn our attention to polynomial maps defined over the ring of integers of a number field and the reduction of these maps to the residue fields. In Section \ref{Examples}, we focus on two families of polynomials defined over $\mathbb{Z}$, $\varphi(x) = ax^d+c$ and $\varphi(x) = (d-1)x^d+(da)x^{d-1}$. We obtain the following generalization of recent work of Shao \cite{Shao} and Heath-Brown \cite{HB}.

\begin{theorem}\label{example}
	Let $d>1$ and consider $\varphi(x)=ax^d+c\in \F_q(x)$, where $a,c\neq 0$. Suppose  $\varphi^n(0)\neq \varphi^m(0)$ for all $i<j\leq n$.
 If $q\equiv 1\mod d$, then  
		\[\#\varphi^n\left(\bP^1(\F_q)\right)=\epsilon_nq+O\left(d^{d^n}\sqrt{q}\right),\] 
		where $\epsilon_n=\frac{2}{(d-1)n}+O\left(\frac{\log n}{n^2}\right)$ for $n\geq 2$.
\end{theorem}

Shao shows for sufficiently large $p$ and $f(x)=x^2+1\in\F_p[x]$, we have $\#f^n\left(\F_p\right)=\mu_n p + O_n\left(\sqrt{p}\right)$, where $\mu_n$ is defined recursively by $\mu_0=1$ and $\mu_{n+1}=\mu_n-\frac{1}{2}\mu_n^2$ \cite[Theorem 1.6]{Shao}. Note, the implied constant in Shao's equation depends on the degree of $f^n(x)$, which is $2^n$.  More generally, Heath-Brown  shows if $\F_q$ is a finite field with odd characteristic and $f(x)=ax^2+bx+c\in \F_q[x]$ has the property that $f^i\left(-b/(2a)\right)\neq f^j\left(-b/(2a)\right)$ for all $0\leq i<j\leq n$, then $\#f^n(\F_q)=\mu_nq+O\left(2^{4^n}\sqrt{q}\right),$ with $\mu_n$ as above \cite{HB}. 
Theorem \ref{example} gives  $\#\varphi^n\left(\bP^1(\F_q)\right)=\epsilon_nq+O\left(2^{2^n}\sqrt{q}\right)$.  One can recover the recursive formula for $\mu_n$ from our formula for $\FPP\left([C_2]^n\right)$ in Section \ref{indicatrix}.

Finally in Section \ref{BoundsPer}, we find bounds on proportions of periodic points.
Let $K$ be a number field, $\varphi(x)$ be a polynomial with coefficients in $\mathcal{O}_K$, the ring of integers of $K$. Suppose the critical orbits of $\varphi$ are infinite and disjoint. Let $\varphi_\fp(x)$ be the reduction of $\varphi(x)$ modulo $\fp$ and consider the action of $\varphi_\fp:\bP^1\left(\cO_K/\fp\right)\rightarrow \bP^1\left(\cO_K/\fp\right)$. Let $q=\left|\mathcal{O}_K/\fp\right|$.
By \cite[Theorem 1.3(b)]{JKMT} and \cite[Theorem 3.8(b)]{Juul}, the proportion of periodic points approaches $0$ as $q$ approaches infinity. We obtain an explicit version of these results.

\begin{theorem}\label{theorem.periodicbd} Let $K$ be a number field, $\mathcal{O}_K$ the ring of integers of $K$, and $\varphi(x)\in \cO_K[x]$. Suppose $\{c\in \bar{k}:\varphi'(c)=0\}\subseteq \cO_K$ and for all $m,n\in\mathbb{N}$, $\varphi^n(a)\neq \varphi^m(b)$ for critical points $a,b$ unless $a= b$ and $n= m$.   Let $\fp$ be a prime of $\mathcal{O}_K$, $q=\left|\mathcal{O}_K/\fp\right|$, and $\varphi_\fp:\bP^1\left(\cO_K/\fp\right)\rightarrow \bP^1\left(\cO_K/\fp\right)$ be the reduction of $\varphi$ modulo $\fp$.
	\begin{enumerate}
		\item[(a)]  If each coset of $\Gal\left(K\left(\varphi^{-1}(t)\right)/K(t)\right)/\Gal\left(\bar{K}\left(\varphi^{-1}(t)\right)/\bar{K}(t)\right)$ contains at least one fixed point free element, then
		\[\frac{\#\Per(\varphi_\fp)}{q+1}=O_d \left(\frac{1}{\log\log q}\right).\]
		\item[(b)]  Suppose further that $K\left(\varphi^{-n}(t)\right)\cap \bar{K} = K$. There is a constant $A$, depending on $\varphi$ such that if $q\geq 2^{[K:\bQ]}e^{A}$ and $\Gal\left((\cO_K/\fp)\left(\varphi^{-1}(t)\right)/(\cO_K/\fp)(t)\right)=G$ is isomorphic to $S_d$ or $C_d$ for $d> 1$ or $A_d$ for $d> 5$. Then
		\[\frac{\#\Per(\varphi_\fp)}{q+1}< \frac{2\log d}{\log(\log q-[K:\bQ]\log 2)-\log A}+12 q^{-1/4}.\]
	\end{enumerate} 

\end{theorem}
A similar statement holds for $G=A_4$, we exclude $A_4$ here for ease of computation. 

Although Equation (\ref{eqngeneralBSD}) holds for any rational function defined over $\F_q$, the rest of this paper focuses on the case where $\Gal(K_n^*/\bar{\F}_q(t))$ is an iterated wreath product. There is current research focusing on what other groups can occur as Galois groups of these extentions, for example \cite{Pink1, BFH}. For the groups that appear, Equation (\ref{eqngeneralBSD}) can be used to bound the image size of iterates, given information about the fixed point proportion. The current method involving iterating an indicatrix polynomial will not directly apply, though it may be possible to find recursive formulas or bounds on the fixed point proportions for these other groups as in \cite{BFH}. Since this work relies on the Chebotarev Density Theorem, the methods in this paper can only be applied when we are able to find explicit bounds on the fixed point proportions of the Galois groups.


\section{Bounds on $\varphi^n(\bP^1(\F_q))$ in Terms of $\FPP(\Gal(K_n/\F_q(t)))$}\label{BoundsImage}

 Recall $K_n = \F_q(\varphi^{-n}(t))$. 
 In this section we prove the following.

\begin{theorem}\label{FPP} Let $\varphi(x)\in \F_q(x)$. Suppose $K_n/\F_q(t)$ is a tamely ramified extension and $K_n\cap\bar{\F}_q=\F_{q^r}$. Then 
\[\left|\frac{\#\varphi^n\left(\bP^1(\F_q)\right)}{\FPP\left(\sigma\Gal\left(K_n/\F_{q^r}(t)\right)\right)}-q\right|<M[K_n:\F_{q^r}(t)]ndq^{1/2}\]
	for some constant $M<6$, where $\sigma\in\Gal\left(K_n/\F_q(t)\right)$ is any element such that $\sigma|_{\F_{q^r}}=\Frob_q$ and $\sigma\Gal\left(K_n/\F_{q^r}(t)\right)$ denotes the coset of $\Gal\left(K_n/\F_{q^r}(t)\right)$ in $\Gal\left(K_n/\F_q(t)\right)$ containing $\sigma$.
	If the extension is geometric ($r=1$) this simplifies to 
	\[\left|\frac{\#\varphi^n\left(\bP^1(\F_q)\right)}{\FPP\left(\Gal(K_n/\F_{q}(t))\right)}-q\right|<M[K_n:\F_{q}(t)]ndq^{1/2}.\]
\end{theorem}


 Since $\Gal(K_n/\F_q(t))$ must be isomorphic to a subgroup of $[S_d]^n$, we see $[K_n:\F_q]d\leq d!^{\frac{d^n-1}{d-1}} d<d!^{d^n}$ \cite[Lemma 4.1]{odoni}. Note, this is a direct generalization of Birch and Swinnerton-Dyer's result mentioned in the introduction. The coefficient of $q$ in equation (\ref{eqnBSD}) is precisely $\FPP(S_d)$. This result applies to rational functions, not just polynomials. It also applies no matter what the Galois group is and regardless of whether or not the extension is geometric.

We use an effective version of the Chebotarev Density Theorem \cite[Proposition 6.4.8]{FJ}, which involves the genus of the extension and a count of the ramified primes. We make use of fact that we are working with an iterated function to get refined estimates for these quantities. 
We also use the following lemma, which has appeared several places in the literature, \cite{Cohen2, Jones3, Juul, JKMT}.

\begin{lemma}[\cite{Juul}, Lemma 3.6]\label{cccondition}
	Let $\alpha\in \bP^1(\F_q)$ such that $(t-\alpha)$ is unramified in $K_n$. Then $\alpha\in \varphi^n\left(\bP^1(\F_q)\right)$ if and only if elements of $\left(\frac{K_n/\F_q(t)}{t-\alpha}\right)$ fix some root of $\varphi^n(x)-t$, where $\left(\frac{K_n/\F_q(t)}{t-\alpha}\right)$ is the Frobenius conjugacy class of the primes of $K_n$ lying above $(t-\alpha)$,
\end{lemma}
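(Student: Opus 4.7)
The plan is to use the standard specialization correspondence between Frobenius cycles at an unramified prime and the factorization of the specialized polynomial. The guiding principle is that, because $(t-\alpha)$ is unramified in $K_n/\F_q(t)$, reduction modulo a prime $\fP$ of $K_n$ above $(t-\alpha)$ identifies the action of the decomposition group $D_\fP$ on roots of $\varphi^n(x)-t$ with the action of $\Gal(\kappa(\fP)/\F_q)$ on roots of $\varphi^n(x)-\alpha$ in $\bar\F_q$.

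First I would fix a prime $\fP$ of $K_n$ lying above $(t-\alpha)$ and record the decomposition data: unramifiedness forces $D_\fP$ to be cyclic of order equal to the residue degree $f$ and to map isomorphically onto $\Gal(\kappa(\fP)/\F_q)$, with the Frobenius element $\Frob_\fP$ sent to the $q$-power map. The case $\alpha=\infty$ reduces to the finite case by a coordinate change on the $\bP^1$ parameterized by $t$, so I would assume $\alpha$ finite from the start.

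Next, I would set up the reduction on roots. Writing $\varphi^n(x)=F_n(x)/G_n(x)$ in lowest terms, each root $\beta\in K_n$ of $\varphi^n(x)-t$ is integral over the localization of $\F_q[t]$ at $(t-\alpha)$, so reduces to some $\bar\beta\in\kappa(\fP)$ satisfying $\varphi^n(\bar\beta)=\alpha$ (with poles handled by the usual $\bP^1$ convention). Separability of $\varphi^n(x)-t$ together with the unramified hypothesis guarantees that this reduction is a $D_\fP$-equivariant bijection between the roots in $K_n$ and the roots in $\bar\F_q$; in particular $\Frob_\fP(\beta)=\beta$ iff $\bar\beta\in\F_q$. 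Since the property ``fixes some root of $\varphi^n(x)-t$'' is conjugation-invariant in $\Gal(K_n/\F_q(t))$, it is well-defined on the Frobenius conjugacy class, giving the equivalence $\alpha\in\varphi^n(\bP^1(\F_q))\iff\Frob_\fP$ fixes some root.

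The main obstacle will be verifying that the reduction from roots in $K_n$ to roots in $\kappa(\fP)$ is a Galois-equivariant \emph{bijection}; this is precisely where the unramified hypothesis is essential, since at a ramified prime distinct Galois conjugates can collapse modulo $\fP$, destroying both injectivity and the identification of Frobenius with the $q$-power map. Once this bijection is established, the remaining bookkeeping — the point at infinity, poles of $\varphi^n$, and integrality of roots away from ramification — is routine.
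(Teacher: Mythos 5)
Your proposal is the standard decomposition-group argument, and it is essentially correct. A point worth noting for context: the paper does not actually prove this lemma; it is imported verbatim as Lemma 3.6 of \cite{Juul}, with the further remark that versions appear in \cite{Cohen2}, \cite{Jones3}, and \cite{JKMT}. So there is no in-paper proof to diverge from, but the argument you sketch — fix $\fP$ above $(t-\alpha)$, use that $D_\fP\cong\Gal(\kappa(\fP)/\F_q)$ in the unramified case, and show reduction of roots is $D_\fP$-equivariant and injective because the discriminant of $\varphi^n(x)-t$ is a unit at $(t-\alpha)$ — is exactly what the cited references do. Two small points you flag correctly as needing care, and which you should make explicit if this were written in full: reduction must be set up projectively (replace $\varphi^n(x)-t$ by the homogenized $F_n(X,Y)-tG_n(X,Y)$) so that roots with a pole at $\fP$ reduce to $\infty\in\bP^1(\kappa(\fP))$ rather than causing an integrality failure; and injectivity of the reduction on roots comes precisely from $(t-\alpha)\nmid\operatorname{disc}(\varphi^n(x)-t)$, which is equivalent to the unramified hypothesis. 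With those spelled out, the proof is complete and matches the cited source.
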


\begin{proof}[Proof of Theorem~\ref{FPP}]
Fix $n$ and let $\mathcal{C}$ be a conjugacy class in $\Gal(K_n/\F_q(t))$. 
Let $C(K_n,\mathcal{C})$ denote the set of points $\alpha$ in $\bP^1(\F_q)$ for which the prime $(t-\alpha)$ is unramified in $K_n$ and  
$\left(\frac{K_n/\F_q(t)}{t-\alpha}\right)=\mathcal{C}$. 
Let $c$ denote the size of $\mathcal{C}$,
$g_{K_n}$ the genus of $K_n$, $K_n\cap \bar{\F}_q=\F_{q^r}$, and $m_n = [K_n:\F_{q^r}(t)]$. By Proposition 6.4.8 in \cite{FJ}, if  $\tau|_{\F_{q^r}}=\Frob_q$ for every $\tau\in \mathcal{C}$, then
\begin{equation}\label{cheb}
\left|\#C(K_n,\mathcal{C})-\frac{c}{m_n}q\right|<\frac{2c}{m_n}\left[(m_n+g_{K_n})q^{1/2}+m_n q^{1/4}+g_{K_n}+m_n\right],
\end{equation}
otherwise, $C(K_n,\mathcal{C})$ is empty.

Let $\mathcal{C'}$ denote the union of the conjugacy classes in $\Gal\left(K_n/\F_q(t)\right)$ fixing at least one root of $\varphi^n(x)-t$ and such that  $\tau|_{\F_q}=\Frob_q$ for all $\tau\in \mathcal{C}$. Let $c'$ denote the size of $\mathcal{C'}$ and let $C(K_n,\mathcal{C'})$ denote the set of points $\alpha$ in $\bP^1(\F_q)$ for which $(t-\alpha)$ is unramified in the splitting field $K_n$ of $\varphi^n(x)-t$ and $\left(\frac{K_n/\F_q(t)}{t-\alpha}\right)\subseteq \mathcal{C}'$. Summing the estimate given in equation (\ref{cheb}) over each conjugacy class in $\mathcal{C}'$ we see

\begin{align*}
\left|\#C(K_n,\mathcal{C'})-\frac{c'}{m_n}q\right|&=\left|\sum_{\mathcal{C}\subseteq \mathcal{C'}}\#C(K_n,\mathcal{C})-\sum_{\mathcal{C}\subseteq \mathcal{C'}}\frac{\#\mathcal{C}}{m_n}q\right|\\
&\leq\sum_{\mathcal{C}\subseteq \mathcal{C'}}\left|\#C(K_n,\mathcal{C})-\frac{\#\mathcal{C}}{m_n}q\right|\\
&<\sum_{\mathcal{C}\subseteq \mathcal{C'}}\frac{2\#\mathcal{C}}{m_n}\left[(m_n+g_{K_n})q^{1/2}+m_n q^{1/4}+g_{K_n}+m_n\right]\\
&=\frac{2c'}{m_n}\left[(m_n+g_{K_n})q^{1/2}+m_n q^{1/4}+g_{K_n}+m_n\right].\\
\end{align*}

Let $R_n$ denote the set of points ramifying in $K_n$. By Lemma \ref{cccondition}, $\left|\varphi^n\left(\bP^1(\F_q)\right)-\#C(K_n,\mathcal{C'})\right|\leq \#R_n$. Thus,
\begin{align}\left|\#\varphi^n\left(\bP^1(\F_q)\right)-\frac{c'}{m_n}q\right| 
&= \left|\#\varphi^n\left(\bP^1(\F_q)\right)-\#C\left(K_n,\mathcal{C'}\right)+\#C\left(K_n,\mathcal{C'}\right)-\frac{c'}{m_n}q\right|\nonumber\\
&\leq \left|\#\varphi^n\left(\bP^1(\F_q)\right)-\#C\left(K_n,\mathcal{C'}\right)\right|+\left|\#C\left(K_n,\mathcal{C'}\right)-\frac{c'}{m_n}q\right|\nonumber\\
&<\#R_n+\frac{2c'}{m_n}\left[(m_n+g_{K_n})q^{1/2}+m_n q^{1/4}+g_{K_n}+m_n\right]. \label{estimate1}
\end{align}

Using a discriminant argument and induction, we can see that any prime ramifying in $K_n$ has the form $\varphi^n(a)-t$ where $a$ is a critical point of $\varphi$ (see \cite[Proposition 1]{CH} or \cite[Lemma 3.4]{JKMT}). Hence, there are at most $n(2d-2)$ ramified primes. 
By the Riemann-Hurwitz formula we have
\begin{align*}2g_{K_n}-2&=-2m_n+ \sum_{\fp\in\mathbb{P}_{\F_q(t)}}\sum_{\fp'|\fp} \left(e(\fp'|\fp)-1\right)\deg\fp'\\
2g_{K_n}-2&\leq -2m_n+n(2d-2)(m_n-1)\\
g_{K_n}&\leq (m_n-1)(nd-n-1),
\end{align*}
where the upper bound on the sum in the first line is the maximum number of ramified primes times an upper bound on the value of the inner sum.

Plugging these bounds into equation (\ref{estimate1}) and simplifying, we see
\[\left|\#\varphi^n\left(\bP^1(\F_q)\right)-\frac{c'}{m_n}q\right|
<M_0\frac{2c'}{m_n}m_n nd q^{1/2}\]
for a constant $M_0<3$. 
Since $\frac{c'}{m_n}= \FPP(\sigma\Gal(K_n/\F_{q^r}(t)))$ where $\sigma \in \Gal(K_n/\F_q(t))$ is any element such that $\sigma|_{\F_{q^r}}=\Frob_q$ and $m_n=[K_n:\F_{q^r}(t)]$, we have 
\[\left|\frac{\#\varphi^n\left(\bP^1(\F_q)\right)}{\FPP\left(\sigma\Gal(K_n/\F_{q^r}(t))\right)}-q\right|<2M_0\left[K_n:\F_{q^r}(t)\right]ndq^{1/2},\]
which completes the proof of Theorem \ref{FPP}.
\end{proof}


\section{Indicatrix Polynomials, Wreath Products, and Fixed Point Proportions}\label{indicatrix}

The indicatrix polynomial of a set of permutations carries information about how many elements of the set fix each possible number of points. This function was developed by Polya \cite{Polya}, generalized by Harary and Palmer \cite{HP}, and used in this current context by Odoni \cite{odoni}.

\begin{definition}
	Let $\Gamma$ be a finite set of permutations acting on a finite set $X$. The \textbf{indicatrix} of $\Gamma$ is the polynomial \[\Phi_\Gamma (x) = \frac{1}{\#\Gamma}\sum_{\gamma\in\Gamma} x^{\tr \gamma},\] where $\tr \gamma$ is the number of points of $X$ fixed by $\gamma$.
\end{definition}

For any set $\Gamma$,  $\Phi_\Gamma$ is a polynomial of degree at most $|X|$ and $\Phi_\Gamma(1)=1$. The coefficient of $x^i$ in $\Phi_\Gamma$ is the proportion of $\gamma\in\Gamma$ with exactly $i$ fixed points. In particular, the constant term of $\Phi_\Gamma$ is the proportion of $\gamma$ in $\Gamma$ with no fixed points, so $\FPP(\Gamma)=1-\Phi_\Gamma(0)$. 


\begin{lemma}[\cite{odoni}, Lemma 4.2]\label{lemma:odoni_indicatrix} For permutation groups $G$ and $H$, acting on sets $X$ and $Y$ respectively, the wreath product $G[H]$ has a natural action on the set $X\times Y$ and the indicatrix function satisfies  
	\[\Phi_{G[H]}(x)=\Phi_G\circ\Phi_H (x).\]
\end{lemma}

If a group $G$ acts on a set $X$ then the $n$-fold iterated wreath product $[G]^n$ acts on $X^n$. It follows immediately from Lemma~\ref{lemma:odoni_indicatrix} that $\Phi_{[G]^n}(x)=\Phi_G^n(x)$ and $\FPP([G]^n)=1-\Phi_G^n(0)$. Odoni uses this to show  $\FPP\left([G]^n\right) = \frac{2}{n\Phi_G''(1)}\left(1+O_G\left(\frac{\log n}{n}\right)\right)$  where $O_G$ is a constant depending only on the group $G$ \cite[Lemma 4.3]{odoni}.  
We extend this idea in the following lemma. We will see in the Section \ref{results} we can apply this lemma in the case $\Gal(K^*_n/\bar{F}_q(t))\cong[G]^n$. The proof is similar to \cite[Lemma 4.3]{odoni},  we include the details here for completeness.

\begin{lemma}\label{cosetasymptotic} Let $G$ be a transitive subgroup of $S_d$ and $\tau$ an element of $S_d$ such that the coset $\tau G$ has at least one element with no fixed points. Then \[1-\Phi^n_{\tau G}(0)=\frac{2}{n\Phi_{\tau G}''(1)}+O_{d}\left(\frac{\log n}{n}\right) \text{ for $n\geq 2$}.\]
\end{lemma}

\begin{proof}
	
	Let $\Phi(x) = \Phi_{\tau G}(x)$ and $X=\{1,2,\dots, n\}$.
	We start by showing $\lim_{n\rightarrow \infty} \Phi^n(0)=1$. Since $G$ is transitive, for any $x\in X$ we can find an $h\in G$ such that $\tau h (x) = x$, then
	 $|\Stab_{\tau G}(x)| =|\{ g\in G: \tau g(x)=x\}| = |\{g\in G: \tau g(x) =\tau h(x)\}|=|\{g\in G: h^{-1} g(x) =x\}| = |\Stab_G(x)|$. Thus, using the orbit/stabilizer theorem, we see
	\begin{align*} \Phi'(1)&=\frac{1}{|\tau G|}\sum_{g\in G}\tr(\tau g)=\frac{1}{|\tau G|}\sum_{x\in X}|\Stab_{\tau G}(x)|=\frac{1}{|G|}\sum_{x\in X}|\Stab_{G}(x)|\\&=\frac{1}{|G|}\sum_{x\in X}\frac{|G|}{|\Orb_G(x)|}=\sum_{x\in X}\frac{1}{|X|}=1.\end{align*}
		
	Since $\Phi'(1)=1$, the line $x=y$ is tangent to the graph of $\Phi(x)$ at $x=1$. 
	Also, since the coefficients of $\Phi$ are nonnegative, $\Phi'(x),\Phi''(x)\geq 0$ for all $x\in[0,1]$. We assumed $\tau G$ has at least one element with no fixed points, so $\Phi(0)>0$. This implies the graph of $\Phi(x)$ lies above the tangent line $x=y$, that is $\Phi(x)> x$ for all $x\in [0,1)$, and hence the sequence $\{\Phi^n(0)\}$ is strictly increasing. The sequence is also bounded above by $1$, so it must converge. The limit must be a fixed point of $\Phi(x)$, hence the limit is $1$.   
	
	Now let $\epsilon_n= 1-\Phi^n(0)$  and $E_n=\frac{1}{\epsilon_n}$, so $\epsilon_n$ is strictly decreasing to $0$ and $E_n$ is strictly increasing to $\infty$. Note, \[1-\epsilon_{n+1}=\Phi(1-\epsilon_n)=\Phi(1)-\Phi'(1)\epsilon_n+\frac{\Phi''(1)}{2}\epsilon_n^2+\dots+\frac{(-1)^d\Phi^{(d)}(1)}{d!}\epsilon_n^d.\]
	Using the fact that $\Phi(1)=1$ and $\Phi'(1)=1$, we have
	\[\epsilon_{n+1}= \epsilon_n-\frac{\Phi''(1)}{2}\epsilon_n^2+\dots+\frac{(-1)^{d-1}\Phi^{(d)}(1)}{d!}\epsilon_n^d\] and hence
	\begin{align}E_{n+1}&=\frac{1}{\epsilon_{n+1}}=\frac{1}{\epsilon_n-\frac{\Phi''(1)}{2}\epsilon_n^2+\dots+\frac{(-1)^{d-1}\Phi^{(d)}(1)}{d!}\epsilon_n^d} \nonumber \\
	&=\frac{E_n^d}{E_n^{d-1}-\frac{\Phi''(1)}{2}E_n^{d-2}+\dots+\frac{(-1)^{d-1}\Phi^{(d)}(1)}{d!}}\nonumber \\
	&=E_n+\frac{\Phi''(1)}{2}+\frac{f(E_n)}{g(E_n)}, \label{eqn:rec}
	\end{align}
	where $f(x)$ is a polynomial of degree less than or equal to $d-2$ and $g(x)$ is a polynomial of degree $d-1$. Further, $g(x)=x^d(1-\Phi(1-\frac{1}{x}))$, so $g(x)>0$ for all $x>1$. This implies $\frac{xf(x)}{g(x)}$ is continuous on $(1,\infty)$. Since $\deg(xf(x))\leq \deg(g(x))$, the limit $\lim_{x\rightarrow \infty}\frac{xf(x)}{g(x)}$ is finite. Hence $\frac{xf(x)}{g(x)}$ is bounded on $(1,\infty)$, and so for all $x>1$ we have  $\left|\frac{f(x)}{g(x)}\right|<\frac{C_1}{x}$ for some constant $C_1$.
	Applying equation (\ref{eqn:rec}) recursively, we see  
	\begin{equation}\label{eqn:E_n}
	\left|E_{n+1}-\frac{(n+1)\Phi''(1)}{2}\right|< C_1\sum_{i=1}^nE_i^{-1}. 
	\end{equation}
	
	
	On the other hand, we have $E_{n+1}\geq E_n+\frac{\Phi''(1)}{2}-C_{1}E_n^{-1}$. Since $E_n$ is strictly increasing to infinity as $n\rightarrow \infty$, there is some $N$ depending on $\Phi$ such that $E_n\geq \frac{4C_1}{\Phi''(1)}$ for all $n\geq N$. Thus, $E_n\geq E_{N}+(n-N)\frac{\Phi''(1)}{4}$ for all $n\geq N$. It follows that $E_n\geq C_2n$  for some constant $C_2$ depending on $\Phi$.
	Substituting into equation~(\ref{eqn:E_n}) and setting $C=\frac{C_1}{C_2}$, we have 
	\[\left|E_{n+1}-\frac{(n+1)\Phi''(1)}{2}\right|\leq \frac{C_1}{C_2}\left(\sum_{i=1}^n\frac{1}{i}\right)\leq C\log (n+1).\] 
	Therefore, $\left|E_{n}-\frac{n\Phi''(1)}{2}\right|\leq C\log n$ for $n\geq 2$. From this we can see \[\epsilon_n=\frac{2}{n\Phi''(1)}+O_{\tau G}\left(\frac{\log n}{n^2}\right).\] 
	Note, the constants $C_1, C_2$ depend on the polynomial $\Phi$ and hence on $\tau G$.
		There are only finitely many subgroups $G$ of $S_d$ and for each $G$ only finitely many choices of $\tau \in S_d$ so that $\tau G$ has a fixed point. Hence, taking the maximum of the implied constants over all choices for $\tau$ and $G$ in $S_d$, we can write
	\[\epsilon_n=\frac{2}{n\Phi''(1)}+O_{d}\left(\frac{\log n}{n^2}\right) \text{ for $n\geq 2$}.\] 
\end{proof}

\section{Bounds for Wreath Products of Cyclic, Symmetric, Alternating, and Dihedral Groups}\label{indicatrixforgps}	

The following lemma will be useful throughout this section.

\begin{lemma}\label{ncompare}
	Let $\Phi(x)$ and $\Psi(x)$ be increasing functions on an interval $I$. If $\Phi(x)\leq \Psi(x)$ for all $x\in I$ then, \begin{align*}\Phi^n(x)&\leq \Psi^n(x),  
	\end{align*} 
	for all $x\in I$.
\end{lemma}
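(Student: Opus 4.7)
The plan is to prove the lemma by induction on $n$, exploiting the monotonicity of $\Psi$ to propagate the inductive inequality through one layer of composition, and then using the hypothesis $\Phi\le \Psi$ to handle the outer layer. Implicit in the statement is that both $\Phi$ and $\Psi$ map $I$ into $I$, so that the iterates $\Phi^n(x)$ and $\Psi^n(x)$ remain in the domain; I would note this at the outset.

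The base case $n=1$ is exactly the hypothesis $\Phi(x)\le \Psi(x)$ for $x\in I$. For the inductive step, assume $\Phi^n(x)\le \Psi^n(x)$ for all $x\in I$. I would then write
\[
\Phi^{n+1}(x) \;=\; \Phi\bigl(\Phi^n(x)\bigr) \;\le\; \Psi\bigl(\Phi^n(x)\bigr) \;\le\; \Psi\bigl(\Psi^n(x)\bigr) \;=\; \Psi^{n+1}(x),
\]
where the first inequality uses the hypothesis $\Phi\le\Psi$ applied at the point $\Phi^n(x)\in I$, and the second inequality uses the fact that $\Psi$ is increasing together with the inductive hypothesis $\Phi^n(x)\le \Psi^n(x)$.

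There is no real obstacle here; the only subtlety worth flagging is the order in which the two inequalities are chained. One must apply the monotone function to the iterated inequality rather than applying $\Phi$ (or $\Psi$) to the pointwise inequality, since we are not assuming $\Phi$ is increasing on the image of $\Psi^n$ in a compatible way — using $\Psi$'s monotonicity on the output of the inductive hypothesis is what makes the two-step comparison go through cleanly. Once this is observed, the induction closes and the lemma follows.
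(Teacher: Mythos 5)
Your proof is correct and is essentially identical to the paper's own argument: both proceed by induction, chaining $\Phi^{n+1}(x)=\Phi(\Phi^n(x))\le\Psi(\Phi^n(x))\le\Psi(\Psi^n(x))=\Psi^{n+1}(x)$, with the first inequality from $\Phi\le\Psi$ and the second from monotonicity of $\Psi$ together with the inductive hypothesis. Your additional remark that $\Phi$ and $\Psi$ must map $I$ into itself is a reasonable clarification of an implicit assumption, but does not change the substance.
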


\begin{proof} This follows by induction on $n$. We have
	$\Phi^{n+1}(x)=\Phi(\Phi^n(x))\leq\Psi(\Phi^n(x))\leq\Psi(\Psi^n(x))=\Psi^{n+1}(x),$
	where the first inequality follows from the $n=1$ case and the second follows from the induction hypothesis and the fact that $\Psi(x)$ is an increasing function. 
\end{proof}

\subsection{Fixed point proportion for $[C_d]^n$}

Let $\Phi_d(x)$ denote the indicatrix of $C_d$, the cyclic group with $d$ elements. Then $\Phi_d^n(x)$ is the indicatrix of $[C_d]^n$. Note, \[\Phi_d(x)=\frac{x^d}{d}+\frac{d-1}{d}.\]

\begin{proposition}\label{Cbound}
	For all $d\geq 2$, \[\frac{2}{(d-1)(n + 4 + \log(n))}< \FPP([C_d]^n) <  \frac{2}{(d-1)(n+1)}. \]
\end{proposition}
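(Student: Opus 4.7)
Let $\epsilon_n := \FPP([C_d]^n) = 1 - \Phi_d^n(0)$. Since $\Phi_d(1-y) = 1 - \tfrac{1-(1-y)^d}{d}$, the sequence satisfies $\epsilon_{n+1} = f(\epsilon_n)$ with $f(x) := \tfrac{1-(1-x)^d}{d}$ and $\epsilon_1 = 1/d$. Writing $E_n := 1/\epsilon_n$, the Taylor expansion $f(x) = x - \tfrac{d-1}{2}x^2 + O(x^3)$ at the neutral fixed point $0$ predicts $E_{n+1} - E_n \to (d-1)/2$, so $E_n \sim (d-1)n/2$. The proposition is the effective two-sided version, and the plan is to sandwich $E_n$ between two recursion-friendly bounds on $f$.

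\textbf{Upper bound.} First I would establish the closed-form comparison
\begin{equation*}
f(x) \leq \frac{2x}{2+(d-1)x} \qquad (x \in [0,1]).
\end{equation*}
Clearing denominators this becomes $P(x) := (d+1)x - 2 + (2+(d-1)x)(1-x)^d \geq 0$, and a direct computation gives $P'(x) = (d+1)\bigl[1 - Q(x)\bigr]$ with $Q(x) := (1-x)^{d-1}(1+(d-1)x)$. Since $Q'(x) = -d(d-1)x(1-x)^{d-2} \leq 0$ and $Q(0) = 1$, we get $Q \leq 1$, so $P' \geq 0$ and $P \geq P(0) = 0$. Taking reciprocals at $x=\epsilon_n$ yields $E_{n+1} \geq E_n + (d-1)/2$, and iterating from $E_1 = d$ gives $E_n \geq \tfrac{(d-1)n + d + 1}{2} > \tfrac{(d-1)(n+1)}{2}$, which is the desired upper bound on $\epsilon_n$.

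\textbf{Lower bound.} For the other direction I would prove the pure-quadratic bound
\begin{equation*}
f(x) \geq x - \frac{d-1}{2}x^2 \qquad (x \in [0,1]).
\end{equation*}
Setting $R(x) := d\bigl(f(x) - x + \tfrac{d-1}{2}x^2\bigr) = 1 - (1-x)^d - dx + \tfrac{d(d-1)}{2}x^2$, one checks $R(0) = R'(0) = 0$ and $R''(x) = d(d-1)\bigl[1 - (1-x)^{d-2}\bigr] \geq 0$ on $[0,1]$ for $d \geq 2$, so $R \geq 0$. Taking reciprocals produces
\begin{equation*}
E_{n+1} - E_n \;\leq\; \frac{d-1}{2} + \frac{(d-1)^2}{2\bigl(2E_n - (d-1)\bigr)}.
\end{equation*}
Feeding in $E_n \geq \tfrac{(d-1)n + d+1}{2}$ from the previous step yields $2E_n - (d-1) \geq (d-1)n + 2 \geq (d-1)n$, so the error term is at most $\tfrac{d-1}{2n}$. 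Telescoping from $E_1 = d$ and using $H_{n-1} \leq 1 + \log n$ produces
\begin{equation*}
E_n \;\leq\; d + \frac{(d-1)(n-1)}{2} + \frac{(d-1)(1 + \log n)}{2} = d + \frac{(d-1)(n + \log n)}{2}.
\end{equation*}
The target $E_n \leq \tfrac{(d-1)(n+4+\log n)}{2}$ then reduces to $d \leq 2(d-1)$, i.e.\ $d \geq 2$, which is our hypothesis.

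\textbf{Main obstacle.} Neither comparison inequality on $f$ is deep; the only real choice is picking the right comparison functions, namely the fractional linear $\tfrac{2x}{2+(d-1)x}$ (the unique rational function of this form making $1/f - 1/x$ exactly constant) for the upper bound, and the truncated Taylor polynomial $x - \tfrac{d-1}{2}x^2$ for the lower bound. After that, everything is bookkeeping: the constant $4$ in $n+4+\log n$ comes from the crude estimate $H_{n-1} \leq 1 + \log n$ together with the initial value $E_1 = d$, and tightening either would shrink it (consistent with the remark following Theorem \ref{imbounds} that these bounds are not sharp).
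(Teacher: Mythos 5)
Your proof is correct and follows the same high-level strategy as the paper's: pass to the reciprocal of $\FPP([C_d]^n)$ (you call it $E_n$; the paper uses the proportional quantity $b_n = \tfrac{2}{d-1}E_n$), show the increment is $\tfrac{d-1}{2} + O(1/n)$, and telescope using a harmonic-sum estimate $H_{n-1} < 1 + \log n$. Where the two differ is the mechanism for the increment bounds. The paper substitutes $a_n = 1 - \tfrac{2}{(d-1)b_n}$ into $\Phi_d$, expands $b_n^d - \bigl(b_n - \tfrac{2}{d-1}\bigr)^d$ and a related binomial sum explicitly, and estimates the resulting alternating series term by term. You instead prove two self-contained comparison inequalities on the iteration map $f(x) = \tfrac{1-(1-x)^d}{d}$: the fractional-linear bound $f(x) \leq \tfrac{2x}{2+(d-1)x}$ (which, being the M\"obius conjugate of a pure translation, gives the clean increment $E_{n+1} \geq E_n + \tfrac{d-1}{2}$ with no error) and the Taylor-truncation $f(x) \geq x - \tfrac{d-1}{2}x^2$, each verified by a short monotonicity/convexity argument. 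This is a tidier and more modular route to the same estimates and avoids the binomial bookkeeping entirely. One very small point to flag: for $d=2$ your final reduction $d \leq 2(d-1)$ is an equality, so the strictness of the lower bound rests on the strict inequality $H_{n-1} < 1 + \log n$; this is true for all $n \geq 1$ but worth saying out loud, since the paper's argument routes the strictness through $b_n - 1 > n$ instead.
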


\begin{proof} Fix $d$. Let $a_n=\Phi_d^n(0)$
	and $b_n=\frac{2}{(d-1)(1-a_n)}$. We will show $n+1 < b_n< n+4+\log(n)$. First, $b_1=\frac{2d}{d-1}$ so $2< b_1\leq 4$. We have
	\begin{align*}
	b_{n+1}&=\frac{\frac{2}{d-1}}{1-a_{n+1}}=\frac{\frac{2}{d-1}}{1-\left(\frac{a_n^d}{d}+\frac{d-1}{d}\right)} = 
	\frac{\frac{2}{d-1}d}{1-a_n^d}
	=\frac{\frac{2}{d-1}d}{1-\left(1-\frac{2}{(d-1)b_n}\right)^d}=\frac{\frac{2}{d-1}db_n^d}{b_n^d-\left(b_n-\frac{2}{(d-1)}\right)^d}\\
	&= b_n+1+\frac{\frac{2}{d-1}db_n^d-(b_n+1)\left(b_n^d-\left(b_n-\frac{2}{(d-1)}\right)^d\right)}{b_n^d-\left(b_n-\frac{2}{(d-1)}\right)^d}.
	\end{align*} 
	We look at the numerator and denominator of the fraction more closely. First note the denominator simplifies as 
	\[\sum_{i=1}^d (-1)^{i+1}{d\choose i} b_n^{d-i}\left(\frac{2}{d-1}\right)^i.\]
	This is an alternating sum with decreasing terms, hence it is greater than the sum of the first two terms $\frac{2d}{d-1}(b_n^{d-1}-b_n^{d-2})$.
	The numerator is 
	\begin{align*}
	&\frac{2}{d-1}db_n^d-(b_n+1)\left(\sum_{i=1}^d (-1)^{i+1}{d\choose i} b_n^{d-i}\left(\frac{2}{d-1}\right)^i\right)\\
	&=\frac{2}{d-1}db_n^d+\left(\sum_{i=1}^d (-1)^{i}{d\choose i} b_n^{d+1-i}\left(\frac{2}{d-1}\right)^i\right)+\left(\sum_{i=1}^d (-1)^{i}{d\choose i} b_n^{d-i}\left(\frac{2}{d-1}\right)^i\right)\\
	&=\left(\sum_{i=1}^{d-1} (-1)^{i+1}{d\choose {i+1}} b_n^{d-i}\left(\frac{2}{d-1}\right)^{i+1}\right)+\left(\sum_{i=1}^d (-1)^{i}{d\choose i} b_n^{d-i}\left(\frac{2}{d-1}\right)^i\right)\\
	&=(-1)^d\left(\frac{2}{d-1}\right)^d+\sum_{i=1}^{d-1}(-1)^i\left(\frac{2}{d-1}\right)^i\left({d\choose i}-{d\choose i+1}\frac{2}{d-1}\right)b_n^{d-i}\\
	&=\sum_{i=2}^{d}(-1)^i\left(\frac{2}{d-1}\right)^i{d+1\choose i+1}\frac{i-1}{d-1}b_n^{d-i}.
	\end{align*}
	Note, this is an alternating sum with decreasing terms. Hence, the numerator is positive and is less than \[\left(\frac{2}{d-1}\right)^2{d+1\choose 3}\frac{1}{d-1}b_n^{d-2}=\frac{2d(d+1)}{3(d-1)^2}b_n^{d-2}.\]
	Since the fraction is positive, we see that $b_{n+1}>b_n+1$ for all $n$ and hence by induction, $b_n>n+1$ for all $n$. 
	Also,
	\begin{align*}b_{n+1}
	&<b_n+1+\frac{\frac{2d(d+1)}{3(d-1)^2}b_n^{d-2}}{\frac{2d}{d-1}(b_n^{d-1}-b_n^{d-2})}\\&=b_n+1+\frac{d+1}{3(d-1)}\frac{1}{b_n-1}\\&<b_n+1+\frac{d+1}{3(d-1)}\frac{1}{n}\\
	&\leq b_n+1+\frac{1}{n}.
	\end{align*}
	Thus, we see
	\begin{align*}
	b_{n+1}&<b_1+n+\sum_{i=1}^{n}\frac{1}{i}
	<4+n+1+\log(n)
	<(n+1)+4+\log(n+1).
	\end{align*}
	
\end{proof}

\subsection{Fixed point proportion for $[S_d]^n$}

Let $\Phi_d(x)$ denote the indicatrix of $S_d$, the symmetric group on $d$ letters.

\begin{lemma}\label{Sindicatrix}
	For $d\geq 2$, \[\Phi_d(x)=\sum_{j=0}^{d} \frac{\Phi_{d-j}(0)x^j}{j!},\] where $\Phi_k(0)=\sum_{i=0}^k \frac{(-1)^i}{i!}$. Thus, $\Phi_d'(x)=\Phi_{d-1}(x)$.
\end{lemma}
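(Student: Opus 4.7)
The plan is to compute the coefficients of $\Phi_d(x)$ directly from the definition of the indicatrix, by binning permutations in $S_d$ according to their number of fixed points, and then to verify the derivative identity by term-by-term differentiation.

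First I would expand $\Phi_d(x) = \frac{1}{d!}\sum_{\sigma\in S_d} x^{\tr\sigma}$ by grouping permutations by their number of fixed points. A permutation $\sigma\in S_d$ with exactly $j$ fixed points is determined by choosing the $j$-element fixed set (in $\binom{d}{j}$ ways) together with a derangement of the remaining $d-j$ letters, so the coefficient of $x^j$ in $\Phi_d(x)$ is $\binom{d}{j}D_{d-j}/d!$, where $D_k$ denotes the number of derangements of $k$ letters. Simplifying gives $\binom{d}{j}D_{d-j}/d! = \frac{1}{j!}\cdot\frac{D_{d-j}}{(d-j)!}$.

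Next I would recognize $D_k/k!$ as $\Phi_k(0)$, the proportion of elements of $S_k$ with no fixed points, and verify the closed form $\Phi_k(0) = \sum_{i=0}^k (-1)^i/i!$. This is the standard inclusion-exclusion computation for derangements: let $A_i\subseteq S_k$ be the set of permutations fixing $i$, and apply $|A_{i_1}\cap\cdots\cap A_{i_m}| = (k-m)!$ to get $|\bigcup A_i| = \sum_{m=1}^k (-1)^{m+1}\binom{k}{m}(k-m)!$, whence $k! - |\bigcup A_i| = k!\sum_{i=0}^k (-1)^i/i!$. Dividing by $k!$ yields the claimed formula for $\Phi_k(0)$, and substituting this into the coefficient computation gives $\Phi_d(x) = \sum_{j=0}^d \frac{\Phi_{d-j}(0)}{j!}x^j$.

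For the derivative, I would simply differentiate this expression and reindex: $\Phi_d'(x) = \sum_{j=1}^d \frac{\Phi_{d-j}(0)}{(j-1)!}x^{j-1} = \sum_{k=0}^{d-1}\frac{\Phi_{(d-1)-k}(0)}{k!}x^k$, and then recognize the right-hand side as the formula for $\Phi_{d-1}(x)$ already proved. There is no real obstacle here; the only content is the inclusion-exclusion count of derangements, and the rest is bookkeeping. One could alternatively prove $\Phi_d'(x) = \Phi_{d-1}(x)$ combinatorially, by differentiating the cycle-index generating function for $S_d$ under the substitution that specializes it to the indicatrix, but the direct derivation from the closed form is cleaner and self-contained.
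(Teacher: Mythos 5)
Your proof is correct and follows essentially the same approach as the paper: group permutations of $S_d$ by their number of fixed points to identify the coefficient of $x^j$ as $\Phi_{d-j}(0)/j!$, compute $\Phi_k(0)$ by the standard inclusion--exclusion count of derangements, and differentiate term by term to obtain $\Phi_d'=\Phi_{d-1}$. The only cosmetic difference is that you introduce the derangement notation $D_k$ before identifying $D_k/k!=\Phi_k(0)$, whereas the paper works directly with $\Phi_{d-j}(0)$ throughout.
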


\begin{proof}

	The coefficient of $x^j$ is the proportion of elements of $S_d$ fixing $j$ letters. Note, there are ${d\choose j}$ choices for the $j$ fixed letters, and then $(d-j)!\Phi_{d-j}(0)$ ways to permute the remaining letters so that none are fixed. Thus, the coefficient is $\frac{{{d}\choose{j}}(d-j)! \Phi_{d-j}(0)}{d!}=\frac{\Phi_{d-j}(0)}{j!}$, and further \[\Phi_d'(x)=\sum_{j=1}^{d} \frac{j\Phi_{d-j}(0)x^{j-1}}{j!}=\sum_{j=0}^{d-1} \frac{\Phi_{d-1-j}(0)x^{j}}{j!}=\Phi_{d-1}(x).\]
	
	The formula for $\Phi_k(0)$ is given in \cite[Corollary 2.6]{BDF}.
\end{proof}

\begin{lemma}\label{Scompare} Let $k\geq 2$. Then for all $d>k$, \begin{align*}\Phi_{d}(x)&\leq \Phi_{k}(x) \text{ if $k$ is even }  \\  \Phi_{d}(x)&\geq \Phi_{k}(x) \text{ if $k$ is odd } 
	\end{align*}  on the interval $[0,1]$ with equality  only when $x=1$.
\end{lemma}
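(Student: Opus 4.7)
The plan is to first establish a clean closed form for the indicatrix polynomial and then reduce the lemma to the alternating series test. Starting from Lemma \ref{Sindicatrix}, which gives
\[\Phi_d(x)=\sum_{j=0}^{d}\frac{\Phi_{d-j}(0)\,x^j}{j!}\quad\text{with}\quad \Phi_{d-j}(0)=\sum_{i=0}^{d-j}\frac{(-1)^i}{i!},\]
I would substitute the formula for $\Phi_{d-j}(0)$, swap the order of summation, set $m=i+j$, and apply the binomial theorem to obtain
\[\Phi_d(x)=\sum_{m=0}^{d}\frac{1}{m!}\sum_{j=0}^{m}\binom{m}{j}x^j(-1)^{m-j}=\sum_{m=0}^{d}\frac{(x-1)^m}{m!}.\]
Equivalently, $\Phi_d(x)$ is the degree-$d$ Taylor polynomial of $e^{x-1}$ about $x=1$. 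As a sanity check this gives $\Phi_d(1)=1$ and $\Phi_d'(x)=\Phi_{d-1}(x)$, matching Lemma \ref{Sindicatrix}. If the direct manipulation feels awkward, a cleaner route is induction on $d$ using exactly $\Phi_d'=\Phi_{d-1}$ and the normalization $\Phi_d(1)=1$, which uniquely determine $\Phi_d$.

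With the closed form in hand the rest is short. For $d>k$,
\[\Phi_d(x)-\Phi_k(x)=\sum_{m=k+1}^{d}\frac{(x-1)^m}{m!}.\]
Substituting $y=1-x\in[0,1]$ rewrites this as $\sum_{m=k+1}^{d}\frac{(-1)^m y^m}{m!}$. The moduli $y^m/m!$ are (weakly) decreasing in $m$ because $\frac{y^{m+1}/(m+1)!}{y^m/m!}=\frac{y}{m+1}\le 1$ on $[0,1]$, so the alternating series test says the sum has the same sign as its leading term $\frac{(-1)^{k+1}y^{k+1}}{(k+1)!}$. When $k$ is even, $k+1$ is odd and this leading term is $\leq 0$, giving $\Phi_d(x)\le\Phi_k(x)$; when $k$ is odd, it is $\geq 0$, giving the reverse inequality. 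Equality holds exactly when $y=0$, i.e.\ $x=1$, because the leading term strictly dominates the remainder of the alternating tail whenever $y>0$ (this is where one needs $k\geq 2$, so that the ratio $y/(m+1)$ is strictly less than $1$ in the relevant range).

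The only real obstacle is the closed form; once $\Phi_d(x)=\sum_{m\le d}(x-1)^m/m!$ is available, every remaining inequality is a textbook alternating-series estimate. The form itself is a one-line binomial rearrangement, or a one-line induction using $\Phi_d'=\Phi_{d-1}$, so I expect no real difficulty in writing out the argument.
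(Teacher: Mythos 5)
Your proof is correct, and it takes a genuinely different route from the paper. You derive the closed form
\[\Phi_d(x)=\sum_{m=0}^{d}\frac{(x-1)^m}{m!},\]
identifying $\Phi_d$ as the degree-$d$ Taylor polynomial of $e^{x-1}$ at $x=1$, and then reduce the lemma to a one-line alternating-series estimate on the tail $\sum_{m=k+1}^{d}\frac{(x-1)^m}{m!}$. The paper instead proceeds by induction on $k$: it sets $\Phi_0=1$, $\Phi_1(x)=x$, verifies those two base cases directly, and then uses the derivative relation $\Phi_d'=\Phi_{d-1}$ together with the normalization $\Phi_d(1)=1$ to push the sign of $\Phi_d-\Phi_k$ up from the sign of $\Phi_{d-1}-\Phi_{k-1}$. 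Both arguments are about the same length, but yours carries a bit more information: the closed form makes the comparison with $e^{x-1}$ transparent and immediately yields explicit error bounds $\lvert\Phi_d(x)-\Phi_k(x)\rvert\le\frac{(1-x)^{k+1}}{(k+1)!}$, which the inductive argument does not surface as cleanly. One small imprecision: you say $k\ge 2$ is needed so that the ratio $y/(m+1)$ is strictly less than $1$; in fact $y/(m+1)\le 1/2<1$ already for all $m\ge 1$ and $y\in[0,1]$, so strict decrease of the terms (and hence strict inequality for $y>0$) holds regardless. The hypothesis $k\ge 2$ is really only there so that $\Phi_k$ is the indicatrix of an honest symmetric group; it is not needed for the alternating-series step.
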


\begin{proof}
	
	We let $\Phi_1(x)=x$ and we let $\Phi_0(x)=1$. 
	It is clear from the definition of the indicatrix that $\Phi_d(1)=1$ for any $d$. Also, since $\Phi_j(0)=\sum_{i=0}^j \frac{(-1)^i}{i!}$, we see that these inequalities hold when $x=0$. 
	
	Now we proceed by induction on $k$, starting with the cases $k=0$ and $k=1$.
	Note, $\Phi_d(x)<1=\Phi_0(x)$ for all $x\in [0,1)$. Also, since \[(\Phi_d(x)-x)'=\Phi_{d-1}(x)-1<0\] on $[0,1)$, the function $\Phi_d(x)-x$ is decreasing to $0$ as $x$ approaches $1$, and hence is positive on this interval. Thus, $\Phi_d(x)>x=\Phi_1(x)$ for all $d>1$.

	If $k$ is even and $d>k$, then $k-1$ is odd and by the induction hypotheses \[(\Phi_d(x)-\Phi_k(x))'=\Phi_{d-1}(x)-\Phi_{k-1}(x)>0,\] so $\Phi_d(x)-\Phi_k(x)$ is increasing to $0$ and hence $\Phi_d(x)<\Phi_k(x)$ on $[0,1)$. Similarly, if $k$ is odd and $d>k$, then  \[(\Phi_d(x)-\Phi_k(x))'=\Phi_{d-1}(x)-\Phi_{k-1}(x)<0, \] and hence $\Phi_d(x)>\Phi_k(x)$ on $[0,1)$.
	
\end{proof}

\begin{proposition}\label{Sbound}
	For all $d\geq 2$, \[\frac{2}{n+4+\log(n)}< \FPP([S_d]^n)\leq\frac{2}{n+2}.\]
\end{proposition}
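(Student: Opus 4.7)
The plan is to mirror the proof of Proposition \ref{Cbound}, exploiting a much cleaner Taylor expansion for $\Phi_d$. Set $a_n = \Phi_d^n(0)$, $\epsilon_n = 1 - a_n = \FPP([S_d]^n)$, and $b_n = 2/\epsilon_n$, so the desired inequalities become
\[ n+2 < b_n \leq n+4+\log n, \]
which I prove by a coupled induction on $n$.

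The key preliminary observation is that, iterating Lemma \ref{Sindicatrix}, $\Phi_d^{(k)}(x) = \Phi_{d-k}(x)$ for $0 \leq k \leq d$ (and zero for $k > d$); since $\Phi_j(1) = 1$ for every $j \geq 0$ (including the conventions $\Phi_0 \equiv 1$ and $\Phi_1(x) = x$), the Taylor expansion of $\Phi_d$ at $x = 1$ collapses to
\[\Phi_d(1-\epsilon) = \sum_{k=0}^d \frac{(-\epsilon)^k}{k!}.\]
This yields the explicit recursion
\[\epsilon_{n+1} = \epsilon_n - \frac{\epsilon_n^2}{2} + \frac{\epsilon_n^3}{6} - \cdots + (-1)^{d-1}\frac{\epsilon_n^d}{d!},\]
an alternating sum with magnitudes decreasing in $k$ (since $\epsilon_n \leq 1$). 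Partial-sum truncation then produces the two-sided bounds $\epsilon_n - \epsilon_n^2/2 \leq \epsilon_{n+1} \leq \epsilon_n - \epsilon_n^2/2 + \epsilon_n^3/6$, with exact equality $\epsilon_{n+1} = \epsilon_n - \epsilon_n^2/2$ when $d = 2$ (the series then terminates).

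For the upper bound on $b_n$, the lower bound on $\epsilon_{n+1}$ gives, exactly as in the cyclic case,
\[ b_{n+1} \leq \frac{b_n^2}{b_n - 1} = b_n + 1 + \frac{1}{b_n - 1}.\]
A direct check shows $b_1 \leq 4$ for every $d \geq 2$ (attained at $d = 2$). Using the companion lower bound $b_n \geq n + 2$ yields $1/(b_n - 1) \leq 1/(n+1)$, and a harmonic-sum estimate then gives $b_n \leq 4 + (n-1) + \sum_{k=2}^{n}1/k \leq n + 4 + \log n$. For the lower bound, the upper truncation $\epsilon_{n+1} \leq \epsilon_n - \epsilon_n^2/2 + \epsilon_n^3/6$ simplifies, after clearing denominators, to the equivalence
\[ b_{n+1} \geq b_n + 1 \iff \epsilon_n^3(\epsilon_n - 1) \leq 0,\]
which holds on $(0,1]$ and is strict whenever $\epsilon_n < 1$, i.e., for $n \geq 1$; together with the direct computation $b_1 \geq 3$ for every $d \geq 2$, induction yields $b_n > n + 2$ for all $n \geq 2$ (and already at $n = 1$ except in the saturating case $d = 3$).

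The main subtle point is the coupled nature of the induction: the upper-bound step leans on the matching lower bound $b_n \geq n + 2$ to control the harmonic sum, so the two inequalities must be advanced in lockstep. A small additional bookkeeping issue is that the strict inequality $b_n > n+2$ is tight at $(d,n) = (3,1)$, where $\FPP(S_3) = 2/3$ exactly; strict inequality from $n = 2$ onward is recovered from the observation that $b_{n+1} > b_n + 1$ strictly as soon as $\epsilon_n < 1$.
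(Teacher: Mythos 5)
Your proof is correct and takes a genuinely different route from the paper's. The paper deduces the lower bound by comparison to $C_2 = S_2$ (via Lemma \ref{Scompare} with $k=2$, Lemma \ref{ncompare}, and Proposition \ref{Cbound}) and the upper bound by comparison to $S_3$ (Lemma \ref{Scompare} with $k=3$), then verifies the $S_3$ estimate $\Phi_3^n(0) > 1 - \frac{2}{n+2}$ by a direct polynomial induction. You instead work with all $d$ at once, exploiting the fact that $\Phi_d^{(k)} = \Phi_{d-k}$ (iterating the paper's Lemma \ref{Sindicatrix}) collapses the Taylor expansion at $1$ to the degree-$d$ truncation of $e^{-\epsilon}$; the alternating-series estimates then give $\epsilon_n - \epsilon_n^2/2 \le \epsilon_{n+1} \le \epsilon_n - \epsilon_n^2/2 + \epsilon_n^3/6$ uniformly in $d$, and the coupled induction on $b_n = 2/\epsilon_n$ closes. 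Your route avoids Lemma \ref{Scompare} and Proposition \ref{Cbound} entirely, is self-contained, and makes the structural reason for the two-sided bound transparent (the first three terms of the exponential series already pin down $b_{n+1} - b_n$ to lie between $1$ and $1 + \frac{1}{b_n-1}$); the paper's route is shorter given that the $C_d$ and comparison machinery is already in place.

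One remark on precision: you correctly flag that strictness in $\FPP([S_d]^n) < \frac{2}{n+2}$ fails at $(d,n)=(3,1)$, where $\FPP(S_3) = 2/3$ exactly. This is in fact a boundary case the paper's stated proposition also does not survive (its base case is $a_{1,3} = 1 - 2/3$, an equality), so the proposition's strict inequality should really be read as holding for $n \ge 2$, or non-strictly at this single point. Your phrasing of the key step as an equivalence ``$b_{n+1} \ge b_n + 1 \iff \epsilon_n^3(\epsilon_n - 1) \le 0$'' is slightly loose — after invoking $\epsilon_{n+1} \le \epsilon_n - \epsilon_n^2/2 + \epsilon_n^3/6$ the clean statement is that $2/(\epsilon_n - \epsilon_n^2/2 + \epsilon_n^3/6) \ge b_n + 1$ is equivalent to $\epsilon_n^3(\epsilon_n-1) \le 0$, which then \emph{implies} $b_{n+1} \ge b_n + 1$ — but the computation and the conclusion are correct.
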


\begin{proof}
	By Proposition~\ref{Scompare}, $\Phi_d(x)\leq \Phi_2(x)$ for all $d$. So the lower bound follows from Proposition \ref{Cbound} and Lemma \ref{ncompare} since $C_2=S_2$.
	
	For the upper bound, Proposition~\ref{Scompare} implies $\Phi_d(x)\geq \Phi_3(x)$ for all $d$.  By Lemma \ref{ncompare}, it suffices to show $\Phi_3^n(0)\geq 1-\frac{2}{n+2}$ for all $n$. Let $a_{n}=\Phi_3^n(0)$. 
	Note, $a_{1}=1/3= 1-2/3$. If $a_{n}\geq 1-2/(n +2)$ for some $n\geq 1$,  then

	\begin{align*}
	a_{n+1}& =\frac{1}{6}a_{n}^3+\frac{1}{2}a_{n}+\frac{1}{3}\\
	& \geq\frac{1}{6}\left(1-\frac{2}{n+2}\right)^3+\frac{1}{2}\left(1-\frac{2}{n+2}\right)+\frac{1}{3}\\
	& =\frac{6n^3+24n^2+36n+16}{6n^3+36n^2+72n+48}\\
	& =1-\frac{12n^2+36n+32}{6n^3+36n^2+72n+48}\\
	& > 1-\frac{2}{n+3}.
	\end{align*}
	
\end{proof}

\subsection{Fixed point proportion for $[A_d]^n$}

Let $\Phi_d(x)$ denote the indicatrix of $A_d$, the alternating group on $d$ letters.

\begin{lemma}\label{Aindicatrix}
	For $d\geq 2$, \[\Phi_d(x)=\sum_{j=0}^{d} \frac{\Phi_{d-j}(0)x^j}{j!},\] where $\Phi_k(0)=(-1)^{k-1}\frac{k-1}{k!}+\sum_{i=0}^k \frac{(-1)^i}{i!} $. Thus, $\Phi_d'(x)=\Phi_{d-1}(x)$.
\end{lemma}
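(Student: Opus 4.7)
The plan is to mimic the argument for the symmetric group (Lemma \ref{Sindicatrix}) step by step, with two adjustments: a parity observation when counting fixed-point patterns, and the computation of the number of even derangements in $S_k$. First I would compute the coefficient of $x^j$ in $\Phi_d(x)$. As in the $S_d$ case, this coefficient equals $\binom{d}{j}$ times the number of elements of $A_d$ fixing one specified set of $j$ letters and deranging the remaining $d-j$, divided by $|A_d| = d!/2$. The key observation is that the fixed letters contribute trivially to the cycle decomposition, so the sign of $\sigma\in A_d$ equals the sign of its restriction to its non-fixed points. Hence an element of $A_d$ with exactly $j$ fixed letters restricts to an \emph{even} derangement on the remaining $d-j$ letters. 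Counting these as $(d-j)!/2$ times the proportion $\Phi_{d-j}(0)$ of derangements inside $A_{d-j}$ and simplifying yields coefficient $\Phi_{d-j}(0)/j!$, identical in form to the symmetric case (the edge values $j=d-1$ and $j=d$ are checked directly, and this forces the conventions $\Phi_1(0)=0$ and $\Phi_0(0)=2$, both of which are consistent with the closed-form formula in the statement).

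Next I would derive the closed form for $\Phi_k(0)$. Writing $E_k$ and $O_k$ for the numbers of even and odd derangements in $S_k$, we already have $E_k+O_k = k!\sum_{i=0}^k (-1)^i/i!$ by the standard inclusion-exclusion count of derangements. So everything reduces to computing $E_k - O_k$. The cleanest approach is a determinant identity: if $J$ is the $k\times k$ all-ones matrix and $I$ is the identity, then
\[
\det(J-I) \;=\; \sum_{\sigma\in S_k}\mathrm{sgn}(\sigma)\prod_{i=1}^k (J-I)_{i,\sigma(i)} \;=\; \sum_{\sigma\text{ derangement}}\mathrm{sgn}(\sigma) \;=\; E_k - O_k,
\]
since the factor $(J-I)_{i,i}=0$ kills every permutation with a fixed point. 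On the other hand, $J$ has eigenvalues $k$ (simple) and $0$ (with multiplicity $k-1$), so $J-I$ has eigenvalues $k-1$ and $-1$ (with multiplicity $k-1$), giving $\det(J-I)=(-1)^{k-1}(k-1)$. Therefore $E_k = \tfrac12\bigl(k!\sum_{i=0}^k(-1)^i/i! + (-1)^{k-1}(k-1)\bigr)$, and dividing by $|A_k|=k!/2$ produces exactly the formula stated for $\Phi_k(0)$.

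Finally, the identity $\Phi_d'(x)=\Phi_{d-1}(x)$ follows by termwise differentiation of the expression for $\Phi_d$, reindexing $j\mapsto j+1$, and reading off the series for $\Phi_{d-1}$, exactly as in Lemma \ref{Sindicatrix}. The only genuine obstacle is the evaluation of $E_k-O_k$; the determinant argument above is the slickest route, but an equivalent inclusion-exclusion on signed counts, or a generating-function manipulation of $\sum_k (E_k-O_k)x^k/k!$, would work as well.
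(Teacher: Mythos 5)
Your proof is correct, and it diverges from the paper's in the one place where a real computation is needed, namely the count of even derangements. For the coefficient extraction you use exactly the paper's argument, with a welcome explicit observation that fixing letters preserves parity so the restriction to the non-fixed points must be an even derangement, and you flag the degenerate cases $j=d,d-1$ (the conventions $\Phi_0(0)=2$, $\Phi_1(0)=0$ are exactly what the closed form gives and what makes $\tfrac{(d-j)!}{2}\Phi_{d-j}(0)$ come out to the right integer). Where you differ is in computing $\Phi_k(0)$: the paper runs inclusion--exclusion inside $A_k$ over the events ``fixes point $i$,'' using $|F_S|=\#A_{k-|S|}$ and then simplifying the resulting alternating sum (which requires peeling off the $i=k-1,k$ terms since $\#A_1=\#A_0=1\neq (k-i)!/2$ there). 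You instead compute the \emph{signed} derangement count $E_k-O_k=\det(J-I)=(-1)^{k-1}(k-1)$ and combine it with the classical unsigned count $E_k+O_k=k!\sum_{i=0}^k(-1)^i/i!$. The determinant identity is slicker and avoids the bookkeeping with the small-index alternating-group sizes; the paper's route is more self-contained and elementary, not invoking anything about eigenvalues of $J$. Both yield the stated closed form, and the derivative identity then follows by termwise differentiation exactly as you say.
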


\begin{proof}

	Note, there are ${d}\choose{j}$ choices for the $j$ fixed letters, and then $\frac{(d-j)!}{2}\Phi_{d-j}(0)$ ways to permute the remaining letters so that none are fixed and the permutation is even. Thus, the coefficient is $\frac{{{d}\choose{j}}\frac{(d-j)!}{2} \Phi_{d-j}(0)}{d!/2}=\frac{\Phi_{d-j}(0)}{j!}$. 
	
	The formula for $\Phi_k(0)$ is given in \cite[Corollary 2.6]{BDF}.
\end{proof}	

\begin{lemma}\label{Acompare} Let $k\geq 3$ and $d>k$. Then, \begin{align*}\Phi_{d}(x)&\geq \Phi_{k}(x) \text{ if $k$ is even }  \\  \Phi_{d}(x)&\leq \Phi_{k}(x) \text{ if $k$ is odd } 
	\end{align*}  on the interval $[0,1]$ with equality  only when $x=1$.
\end{lemma}

\begin{proof}
	We will prove this by showing when $k$ is even $\Phi_{k+2}(x)\geq \Phi_k(x)$ and $\Phi_{k+1}(x)\geq \Phi_k(x)$ for $x\in [0,1)$ and when $k$ is odd $\Phi_{k+2}(x)\leq \Phi_k(x)$ and $\Phi_{k+1}(x)\leq \Phi_k(x)$ for $x\in [0,1)$. It is clear from the definition of the indicatrix that $\Phi_d(1)=1$ for any $d$. 
	
	It is easy to check $\Phi_5(x) - \Phi_3(x)= \left(  \frac{x^5}{60}+\frac{x^2}{3}+\frac{x}{4}+\frac{2}{5}\right)-\left(\frac{x^3}{3}+\frac{2}{3}\right)<0$ and $\Phi_4(x)-\Phi_3(x) = \left(\frac{x^4}{12}+\frac{2x}{3} +\frac{1}{4}\right) -\left(\frac{x^3}{3}+\frac{2}{3}\right)<0 $ for all $x \in [0,1)$, establishing the result in these cases. 
	
	If $k$ is even, then  $(\Phi_{k+2}(x)-\Phi_k(x))' = \Phi_{k+1}(x)-\Phi_{k-1}(x)$ and $(\Phi_{k+1}(x)-\Phi_k(x))' = \Phi_{k}(x)-\Phi_{k-1}(x)$, by induction on $k$, each of these will be negative for $x\in [0,1)$. This implies that $\Phi_{k+2}(x)-\Phi_k(x)$ and $\Phi_{k+1}(x)-\Phi_k(x)$ are decreasing to $0$ on $[0,1)$ and hence are positive on this interval. Similarly, if $k$ is odd, each of these derivatives will be positive and hence increasing to $0$ on $[0,1)$, so they negative on this interval. 
\end{proof}

\begin{proposition}\label{Abound}
	For all $d\geq 5$, 
	\[\frac{2}{n+4+\log(n)}<\FPP([A_d]^n)<\frac{2}{n+2}\] and $\frac{2}{n+2}<\FPP([A_4]^n)<\frac{2}{n+1-\log(n)}.$
\end{proposition}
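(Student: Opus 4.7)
The proof naturally splits into two regimes, $d \ge 5$ and $d = 4$, and in both cases I plan to deploy the indicatrix comparison toolkit (Lemmas \ref{ncompare} and \ref{Acompare}) already developed in this section, together with the $\FPP$ estimates for $C_d$ and $S_d$ from Propositions \ref{Cbound} and \ref{Sbound}.

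For $d \ge 5$ the strategy is to sandwich $\Phi_{A_d}$ between two simpler indicatrices on $[0,1]$ using three explicit polynomial factorizations:
\[
60\bigl(\Phi_{A_5}(x) - \Phi_{C_2}(x)\bigr) = (x-1)^3(x^2+3x+6),
\]
\[
60\bigl(\Phi_{A_5}(x) - \Phi_{S_3}(x)\bigr) = (x-1)^4(x+4),
\]
\[
360\bigl(\Phi_{A_6}(x) - \Phi_{S_3}(x)\bigr) = (x-1)^4(x^2+4x+10).
\]
The first is nonpositive on $[0,1]$; the other two are nonnegative. Combined with Lemma \ref{Acompare} at $k=5$ odd (giving $\Phi_{A_d} \le \Phi_{A_5}$ for every $d\ge 5$), the first upgrades to $\Phi_{A_d} \le \Phi_{C_2}$ on $[0,1]$, so that $\Phi_{A_d}^n(0) \le \Phi_{C_2}^n(0)$ by Lemma \ref{ncompare}, and the lower bound $\frac{2}{n+4+\log n} < \FPP([A_d]^n)$ drops out of the $d=2$ case of Proposition \ref{Cbound}. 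For the matching upper bound the second and third factorizations handle $d=5,6$ directly, and for $d\ge 7$ Lemma \ref{Acompare} at $k=6$ even produces the chain $\Phi_{A_d} \ge \Phi_{A_6} \ge \Phi_{S_3}$; Lemma \ref{ncompare} and the estimate $\Phi_{S_3}^n(0) > 1 - \frac{2}{n+2}$ established inside the proof of Proposition \ref{Sbound} then close things out.

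For $d=4$ the lower bound $\FPP([A_4]^n) > \frac{2}{n+2}$ is a direct induction. With $a_n = \Phi_{A_4}^n(0)$, the base case is $a_1 = 1/4 < 1/3$. For the step, setting $u = 2/(n+2) \in (0, 2/3]$ and invoking monotonicity of $\Phi_{A_4}$, one must verify $\Phi_{A_4}(1-u) \le 1 - \frac{2u}{2+u}$; expanding $\Phi_{A_4}(1-u) = 1 - u + u^2/2 - u^3/3 + u^4/12$ and clearing denominators reduces this to the elementary inequality $u^2 - 2u - 2 < 0$, which holds on $(0, 2/3]$ since the larger root of $u^2-2u-2$ is $1+\sqrt{3}$.

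The principal obstacle is the upper bound $\FPP([A_4]^n) < \frac{2}{n+1-\log n}$. The same Taylor expansion at $x=1$ gives the exact recursion $\epsilon_{n+1} = \epsilon_n - \epsilon_n^2/2 + \epsilon_n^3/3 - \epsilon_n^4/12$, and setting $b_n = 2/\epsilon_n$ and inverting $\epsilon_n/\epsilon_{n+1} = 1/P(\epsilon_n)$ with $P(\epsilon) = 1 - \epsilon/2 + \epsilon^2/3 - \epsilon^3/12$ via its termwise-positive geometric series (truncating through second order) yields
\[
b_{n+1} \;\ge\; b_n + 1 - \frac{1}{3 b_n} - \frac{2}{b_n^2}.
\]
I plan to prove $b_n > n + 1 - \log n$ by induction on $n$: the base case $b_1 = 8/3 > 2$ is trivial, and the step reduces to $\log(n+1) - \log n \ge \frac{1}{3 b_n} + \frac{2}{b_n^2}$. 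Using $\log(1+1/n) \ge 1/n - 1/(2n^2)$ and the bound $b_n > n/2$ (itself a consequence of the inductive hypothesis for $n$ large enough that $\log n \le n/2$), this turns into a purely arithmetic inequality valid past an explicit threshold $N_0$, with the finitely many remaining $n$ checked by direct numerical iteration of the recursion. The delicate calibration of the $\log$ correction against the $1/b_n$ error from the recursion, together with ensuring the small-$n$ numerical cases close up, is where the real work of this part sits.
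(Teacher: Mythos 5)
Your proposal follows the same overall architecture as the paper's proof: for $d\ge 5$, sandwich $\Phi_{A_d}$ between $\Phi_{S_3}$ and $\Phi_{S_2}=\Phi_{C_2}$ via Lemma~\ref{Acompare} and the explicit polynomial comparisons, then invoke Lemma~\ref{ncompare} and Proposition~\ref{Sbound}; for $d=4$, pass to $b_n=2/(1-a_n)$ and control the recursion. Your three factorizations are correct (I checked them) and make the paper's ``easy to check'' step explicit; the $\Phi_{A_5}-\Phi_{S_3}$ factorization is actually redundant since $\Phi_{A_5}\ge\Phi_{A_6}\ge\Phi_{S_3}$ already follows from Lemma~\ref{Acompare} together with your third factorization, but it does no harm. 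For the $A_4$ lower bound, your reduction of $\Phi_{A_4}(1-u)\le 1-\tfrac{2u}{2+u}$ to $u^2-2u-2<0$ is a valid, slightly more hands-on alternative to the paper's one-line observation that the exact recursion has the form $b_{n+1}=b_n+1-(\text{positive})$, hence $b_n<b_1+(n-1)<n+2$.

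The one genuinely different route is your treatment of the $A_4$ upper bound. Your estimate $b_{n+1}\ge b_n+1-\tfrac{1}{3b_n}-\tfrac{2}{b_n^2}$ does hold (the two-term geometric-series truncation of $1/P(\epsilon_n)$ works out, and the terms you drop are nonnegative for $b_n\ge 8/3$), but running a per-step induction on $b_n>n+1-\log n$ directly forces you to verify $\log(n+1)-\log n>\tfrac{1}{3m}+\tfrac{2}{m^2}$ with $m=n+1-\log n$, and this fails for small $n$: with the crude $m>n/2$ reduction it only closes past $n\approx 26$, and even with sharper estimates the step is false for, e.g., $n=2$ when using only the inductive hypothesis. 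You correctly anticipate this and propose to clear the small cases by iterating the exact recursion numerically; that does work, but it makes the proof partly computational. The paper avoids the numerical check entirely by a cleaner maneuver: it bounds $\tfrac{b_n^2+2b_n-2}{3b_n^3-3b_n^2+4b_n-2}<\tfrac{2}{3b_n}$, telescopes $b_{n+1}>b_1+n-\tfrac{2}{3}\sum_{i=1}^n 1/b_i$, and then controls $\sum 1/b_i$ in one stroke by splitting $\tfrac{1}{i+1-\log i}=\tfrac{1}{i+1}+\tfrac{\log i}{(i+1)(i+1-\log i)}$ and comparing to integrals. Your route is correct but buys nothing over the paper's and costs a finite verification; if you keep your recursion bound, consider adopting the paper's summation argument to dispense with the small-$n$ cases.
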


\begin{proof}
	It is easy to check that $\Phi_{S_3}(x)\leq \Phi_{A_6}(x)$ and $\Phi_{A_5}(x)\leq\Phi_{S_2}(x)$ for all $x\in [0,1]$. Hence, by Lemma \ref{Acompare} $\Phi_{S_3}(x)\leq \Phi_{A_d}(x)\leq \Phi_{S_2}(x)$ for all $d\geq 5$. Then the result for $d\geq 5$ follows from Lemma \ref{ncompare} and Proposition \ref{Sbound}.
	
	Now, for $d=4$, write $a_n=\Phi_4^n(0)$ and let $b_n=\frac{2}{1-a_n}$, we will show $n+1-\log(n) < b_n<n+2$. Note, $2<b_1=\frac{8}{3}<3$. For $n\geq 2$, we have
	\begin{align*}
	b_{n+1}&=\frac{2}{1-a_{n+1}}
	=\frac{2}{1-\left(\frac{a_n^4}{12}+\frac{2a_n}{3}+\frac{1}{4}\right)} 
	=	\frac{24}{9-a_n^4-8a_n}\\
	& = \frac{24}{9-\left(1-\frac{2}{b_n}\right)^4-8\left(1-\frac{2}{b_n}\right)}
	= \frac{24b_n^4}{24b_n^3-24b_n^2+32b_n-16}\\
	&=b_n+1-\frac{b_n^2+2b_n-2}{3b_n^3-3b_n^2+4b_n-2}.
	\end{align*} 
	We can see $\frac{x^2+2x-2}{3x^2-3x+4-2/x}$ is positive and decreasing for $x\geq 2$. Since $b_n>2$ for all $n$, we have $b_{n+1}<b_n+1<n+2$ and $\frac{b_n^2+2b_n-2}{3b_n^3-3b_n^2+4b_n-2}=\frac{1}{b_n}\frac{b_n^2+2b_n-2}{3b_n^2-3b_n+4-\frac{2}{b_n}}<\frac{2}{3b_n}$. Hence
	\[b_{n+1}>b_1+{n}-\frac{2}{3}\sum_{i=1}^{n}\frac{1}{b_i}>n+\frac{8}{3}-\frac{2}{3}\sum_{i=1}^n\frac{1}{b_i}.\]
	
	Now we show $b_n>n+1-\log n$ by induction. We have
	\begin{align*}
	\sum_{i=1}^n\frac{1}{b_i}&<\sum_{i=1}^n\frac{1}{i+1-\log i}=\sum_{i=1}^n\frac{1}{i+1}+\sum_{i=1}^n\frac{\log i}{(i+1)(i+1-\log i)}\\
	&<\int_1^{n+1}\frac{1}{x} \; dx+\int_1^\infty \frac{\log x}{(x+1)(x+1-\log x)}\; dx < \log(n+1)+1. 
	\end{align*}
	Therefore, $b_{n+1}>n+\frac{8}{3}-\frac{2}{3}(\log(n+1)+1) >n +2 -\log(n+1)$. 
	
\end{proof}

\subsection{Fixed point proportion for $[D_d]^n$}

Let $d\geq3$ and let $\Phi_d(x)$ denote the indicatrix of $D_d$, the dihedral group of symmetries on a regular polygon with $d$ sides.
If $d$ is odd, then \[\Phi_d(x)=\frac{1}{2d}\left(x^d+dx+(d-1)\right),\] and if $d$ is even, then \[\Phi_d(x)=\frac{1}{2d}\left(x^d+\frac{d}{2}x^2+\frac{3d-2}{2}\right).\]

\begin{lemma}\label{Dcompare} Let $d>k\geq 2$. If $k, d$ are both even or both odd then \[\Phi_{d}(x)\geq \Phi_{k}(x) \]  on the interval $[0,1]$ with equality  only when $x=1$ and if $k$ is odd \[\Phi_{k+1}(x)\geq \Phi_{k}(x).\] 
\end{lemma}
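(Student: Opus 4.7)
The plan is to reduce the desired inequality to a comparison of derivatives and exploit the identity $\Phi_d(1)=1$, which holds for every $d$ directly from the definition of the indicatrix. Since $\Phi_d(1)-\Phi_k(1)=0$, proving $\Phi_d(x)\geq\Phi_k(x)$ on $[0,1]$ with equality only at $x=1$ is equivalent to showing that $\Phi_d-\Phi_k$ is non-increasing on $[0,1]$ and strictly decreasing on a left neighborhood of $1$; that is, it suffices to show $\Phi_k'(x)\geq\Phi_d'(x)$ on $[0,1]$ with strict inequality on $(0,1)$. The same device handles the mixed-parity claim $\Phi_{k+1}(x)\geq\Phi_k(x)$.

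The next step is to compute the derivatives from the closed-form expressions already stated in the paper. For odd $d$ we get
\[
\Phi_d'(x)=\frac{x^{d-1}+1}{2},
\]
and for even $d\geq 4$ (and also $d=2$, where the same formula gives the correct $\Phi_2(x)=(x^2+1)/2$ for $D_2$ acting on two vertices) we get
\[
\Phi_d'(x)=\frac{x^{d-1}+x}{2}.
\]
These two derivative formulas make the comparison essentially transparent in every case.

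I would then run the short case analysis. When $k$ and $d$ have the same parity (both odd or both even) with $d>k$, the cross terms cancel and
\[
\Phi_k'(x)-\Phi_d'(x)=\frac{x^{k-1}-x^{d-1}}{2},
\]
which is $\geq 0$ on $[0,1]$ and strictly positive on $(0,1)$ since $d-1>k-1$. When $k$ is odd and we compare with $d=k+1$, a direct factorization gives
\[
\Phi_k'(x)-\Phi_{k+1}'(x)=\frac{(x^{k-1}+1)-(x^k+x)}{2}=\frac{(1-x)(1+x^{k-1})}{2},
\]
which is $\geq 0$ on $[0,1]$ and strictly positive on $[0,1)$. In either case $\Phi_d-\Phi_k$ (respectively $\Phi_{k+1}-\Phi_k$) is non-increasing with strict monotonicity on the relevant subinterval, and since it vanishes at $x=1$ it is non-negative on $[0,1]$ and zero only at $x=1$.

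There is no real obstacle beyond keeping track of parities; the only mildly delicate point is the boundary case $k=2$, which is not covered by the paper's stated even-$d$ formula but where the same expression $\Phi_2(x)=(x^2+1)/2$ (coming from the non-faithful action of $D_2$ on the two vertices of a digon) gives $\Phi_2'(x)=x$, and the inequality $x\geq(x^{d-1}+x)/2$ for even $d\geq 4$ is immediate from $x\geq x^{d-1}$ on $[0,1]$. Once this case is included, the derivative bookkeeping above handles all remaining cases uniformly.
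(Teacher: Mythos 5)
Your proof is correct and takes essentially the same route as the paper's: exploit $\Phi_d(1)=1$ for all $d$ (so it suffices to control the sign of $(\Phi_d-\Phi_k)'$ on $[0,1)$), compute $\Phi_d'(x)=\tfrac{1}{2}(x^{d-1}+1)$ for odd $d$ and $\Phi_d'(x)=\tfrac{1}{2}(x^{d-1}+x)$ for even $d$, and in each parity case observe that the derivative comparison collapses to $\tfrac{1}{2}(x^{k-1}-x^{d-1})\geq 0$ or $\tfrac{1}{2}(1-x)(1+x^{k-1})\geq 0$. The only cosmetic difference is your explicit treatment of the boundary case $k=2$ via the degenerate $D_2$-action; in fact the paper's stated even-$d$ formula already specializes correctly to $\Phi_2(x)=(x^2+1)/2$ when $d=2$, so this is not a genuine gap in the paper's argument, merely a point it leaves implicit. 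Your proof is valid.
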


\begin{proof}
	It is clear from the definition of the indicatrix that $\Phi_d(1)=1$ for any $d$. 
	
	Let $d>k\geq 2$  with $k,d$ even. Then
	
	\[(\Phi_d(x)-\Phi_k(x))'=\left(\frac{x^{d-1}}{2}+\frac{x}{2}\right)-\left(\frac{x^{k-1}}{2}+\frac{x}{2}\right)=\frac{1}{2}(x^{d-1}-x^{k-1})<0\]
	on $[0,1)$. Thus, the function $\Phi_{d}(x)-\Phi_k(x)$ is decreasing to $0$ as $x$ approaches $1$, and hence is positive on this interval and $\Phi_{d}(x)>\Phi_k(x)$.
	
	Similarly, if $d>k\geq 2$  with $k,d$ odd, then
	\[(\Phi_d(x)-\Phi_k(x))'=\left(\frac{x^{d-1}}{2}+\frac{1}{2}\right)-\left(\frac{x^{k-1}}{2}+\frac{1}{2}\right)=\frac{1}{2}(x^{d-1}-x^{k-1})<0\] on $[0,1)$ and $\Phi_{d}(x)>\Phi_k(x)$.
	
	Finally, if $k$ is odd then $\Phi_{k+1}(0)>\Phi_k(0)$ and  \[(\Phi_{k+1}(x)-\Phi_k(x))'=\left(\frac{x^k}{2}+\frac{x}{2}\right)-\left(\frac{x^{k-1}}{2}+\frac{1}{2}\right)=\frac{x^{k-1}}{2}(x-1)+\frac{1}{2}(x-1)<0\] on $[0,1)$. So the function $\Phi_{k+1}(x)-\Phi_k(x)$ is decreasing to $0$ as $x$ approaches $1$, and hence is positive on this interval. Thus, $\Phi_{k+1}(x)>\Phi_k(x)$ for all odd $k\geq 3$. 
\end{proof}

\begin{proposition}\label{Dbound}
	For all  $d\geq 3$,  \[\FPP([D_d]^n)<\frac{2}{n+2}.\]
\end{proposition}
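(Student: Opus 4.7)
The plan is to reduce the dihedral bound to the symmetric case via the isomorphism $D_3 \cong S_3$, using Lemma \ref{Dcompare} to show that among the indicatrices $\Phi_{D_d}$ for $d \geq 3$, the function $\Phi_{D_3}$ is pointwise smallest on $[0,1]$.

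First I would verify that $\Phi_{D_d}(x) \geq \Phi_{D_3}(x)$ on $[0,1]$ for every $d \geq 3$. This follows by a short case analysis on the parity of $d$, chaining together the comparisons in Lemma \ref{Dcompare}: if $d$ is odd and $d \geq 5$, then $d$ and $3$ have the same parity and $\Phi_{D_d} \geq \Phi_{D_3}$ directly; if $d = 4$, then applying the ``odd $k$'' clause of Lemma \ref{Dcompare} with $k=3$ gives $\Phi_{D_4} \geq \Phi_{D_3}$; and if $d$ is even with $d \geq 6$, then $\Phi_{D_d} \geq \Phi_{D_4} \geq \Phi_{D_3}$ by combining the same-parity comparison with the $d=4$ case.

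Next I would feed this into Lemma \ref{ncompare}. Since $\Phi_{D_d}$ is clearly increasing on $[0,1]$ (evident from the explicit formula), the lemma yields $\Phi_{D_d}^n(0) \geq \Phi_{D_3}^n(0)$, and hence
\[
\FPP([D_d]^n) = 1 - \Phi_{D_d}^n(0) \;\leq\; 1 - \Phi_{D_3}^n(0) = \FPP([D_3]^n).
\]
Finally, noting the group-theoretic identification $D_3 \cong S_3$ (which gives the identity of indicatrices $\Phi_{D_3} = \Phi_{S_3}$, as one can also check directly from the formula $\tfrac{1}{6}(x^3 + 3x + 2) = \tfrac{x^3}{6} + \tfrac{x}{2} + \tfrac{1}{3}$), we conclude $\FPP([D_3]^n) = \FPP([S_3]^n) < \frac{2}{n+2}$ by Proposition \ref{Sbound}.

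There is no real obstacle here: all the analytic work is already encapsulated in Lemma \ref{Dcompare}, Lemma \ref{ncompare}, and Proposition \ref{Sbound}, and the only mildly subtle point is organizing the parity cases so that every $d \geq 3$ can be linked back to $d=3$ through a chain of pointwise inequalities. It is worth remarking that the resulting bound is far from sharp for large $d$ (where one expects $\FPP([D_d]^n) \sim 2/(nc)$ with $c = d/2$ or $(d-1)/2$), but $\frac{2}{n+2}$ is what is needed for the applications in Section \ref{BoundsPer}.
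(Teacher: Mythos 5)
Your proposal is correct and follows essentially the same route as the paper: reduce to $d=3$ via Lemma~\ref{Dcompare} and Lemma~\ref{ncompare}, then invoke $D_3 = S_3$ and Proposition~\ref{Sbound}. The only difference is that you spell out the parity chaining (odd $d$, then $d=4$, then even $d \geq 6$) that the paper leaves implicit, which is a correct and reasonable elaboration.
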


\begin{proof}
	By Lemmas \ref{Dcompare} and \ref{ncompare}, it suffices to show $\FPP([D_3]^n)<\frac{2}{n+2}$. This follows from Proposition \ref{Sbound} since $D_3=S_3$.
	
\end{proof}


\section{Bounds on Image Size}\label{results}

In this section we use the results of the previous sections to give bounds on the image size of iterates of a rational map $\varphi$ when the Galois group of the geometric part of the splitting field extension is an iterated wreath product.

\begin{theorem}\label{cor1}
	Suppose $K_n/\F_q(t)$ is tamely ramified, 
	$\Gal(K_n^*/\bar{\F}_q(t))\cong [G]^n$ for some group $G$, and $\varphi:\bP^1(\F_q)\rightarrow\bP^1(\F_q)$ is not a bijection. Then \[\#\varphi^n(\bP^1(\F_q)) =\epsilon_n q+O_{d}\left(|G|^{d^n}q^{1/2}\right),\]
	 where  $\epsilon_n=\FPP(\sigma\Gal(K_n/\F_{q^r}(t)))$ and $\F_{q^r}=K_1\cap\bar{\F}_q$ for any $\sigma\in \Gal(K_n/\F_q(t))$ such that $\sigma|_{{\F}_{q^r}}=\Frob_q$. Moreover, $\epsilon_1=1-\Phi(0)$ and $\epsilon_n=\frac{2}{n\Phi''(1)}+O_d\left(\frac{\log n}{n^2}\right)$ for $n\geq 2$, where $\Phi(x)$ is the indicatrix function for  the coset $(\sigma|_{K_1}) \Gal(K_1/\F_{q^r}(t))$. 
\end{theorem}




\begin{proof}

	Let $\F_{q^r}=K_1\cap \bar{\F}_q$. Then $\Gal(K_1/\F_{q^r}(t))\cong G$ and  \[[G]^n\cong \Gal(K_n^*/\bar{\F}_q(t))\subseteq \Gal(K_n/\F_{q^r}(t))\subseteq [G]^n\]  by \cite[Lemma 3.3]{JKMT}. Hence,  we must have $\Gal(K_n/\F_{q^r}(t))\cong[G]^n$ and $K_n\cap\bar{\F}_q=\F_{q^r}$ by \cite[Proposition 3.6]{JKMT}. 
	
	Let $\sigma$ be any element of $\Gal(K_n/\F_q(t))$ such that $\sigma|_{\F_{q^r}}=\Frob_q$.  Let $\Phi(x)$ be the indicatrix function for the coset  $(\sigma|_{K_1}) \Gal(K_1/\F_{q^r}(t))$ in $\Gal(K_1/\F_{q}(t))$. Then by \cite[Lemma 2.2]{Juul}, $\Phi_{\sigma\Gal(K_n/\F_{q^r}(t))} = \Phi^n$.	
	Since we assumed $\varphi:\F_q\rightarrow \F_q$ is not a bijection, at least one element of $(\sigma|_{K_1}) \Gal(K_1/\F_{q^r}(t))$ has no fixed points by \cite[Lemma 4.3 and Proposition 4.4]{GTZ}. 	
	Let $\epsilon_n=\FPP(\sigma\Gal(K_n/{\F}_{q^r}(t)))=1-\Phi^n(0)$.  Note, $d[K_n:\F_{q^r}(t)]=d|G|^{\frac{d^n-1}{d-1}}\leq |G|^{d^n}$. Then by Theorem \ref{FPP}  we have,
	\[\left|\#\varphi^n\left(\bP^1(\F_q)\right)-\epsilon_n q\right|<\epsilon_n M|G|^{d^n}nq^{1/2}.\]
	By Lemma \ref{cosetasymptotic}, $\epsilon_n =\frac{2}{n\Phi''(1)}+O_d\left(\frac{\log n}{n^2}\right)$. By the proof of Lemma \ref{cosetasymptotic}, $\epsilon_n<\frac{C_2}{n}$ for some constant $C_2$ depending on $\Phi$. Let $C$ be the maximum over all such constants for transitive subgroups $G$ of $S_d$ and $\tau\in S_d$ such that  $\tau G$ is transitive, this constant depends only on $d$. Substituting into the last equation,
	\[\left|\#\varphi^n\left(\bP^1(\F_q)\right)-\epsilon_n q\right|< CM|G|^{d^n}q^{1/2}.\]

\end{proof} 

\begin{theorem}[Restatement of Theorem \ref{imbounds}] Suppose $K_n$ is tamely ramified over $\F_q(t)$ and  $\Gal\left(K_n/\F_q(t)\right) \cong [G]^n$, where $G=C_d, S_d,$ or $A_d$ for any $d\geq 2$. Then \[\#\varphi^n\left(\bP^1(\F_q)\right)=\epsilon_n q+O\left(|G|^{d^n}q^{1/2}\right),\] 
where $\epsilon_n=\FPP([G]^n)$. Moreover, $\epsilon_n=\frac{2}{nc_G}+O\left(\frac{\log n}{n^2}\right)$ if $n\geq 2$, where $c_{S_d}=c_{A_d} = 1$ and $c_{C_d}=d-1$.
\end{theorem}

\begin{proof}
	For $\varphi$, $K_n$ as in the theorem, $K_n/\F_q(t)$ must be a geometric extension by \cite[Proposition 3.6]{JKMT}. Hence by Theorem \ref{FPP}, \[\left|{\#\varphi^n\left(\bP^1(\F_q)\right)}-{\epsilon_n}q\right|<{\epsilon_n}M[K_n:\F_{q}(t)]ndq^{1/2},\]
	where $\epsilon_n=\FPP([G]^n)$. By Propositions \ref{Cbound}, \ref{Sbound}, and \ref{Abound}, $\epsilon_n<\frac{2}{n+1-\log n}$. So we can see $\epsilon_n n<3$. 
	Substituting we see \[\left|{\#\varphi^n\left(\bP^1(\F_q)\right)}-{\epsilon_n}q\right|<3M|G|^{d^n}q^{1/2}.\]
	Since we have shown for each group $\frac{2}{c_G(n+4+\log n)}<\epsilon_n<\frac{2}{c_G(n+1-\log n)}$ we can see $|\epsilon_n-\frac{2}{nc_G}|<\frac{2\log n +8}{c_G(n^2+4n+n\log n)}< \frac{4.05\log n}{n^2}$ for $n\geq 2$. 
	
\end{proof}

Now we describe rather mild conditions on the critical orbits of $\varphi$ that allow us to use the above results.
 Let $k$ be a field, $\varphi\in k(x)$, and $t$ transcendental over $k$. Let $K_n=k(\varphi^{-n}(t))$ and let ${crit}_\varphi$ denote the set of critical points of $\varphi$ in $\bar{k}$.

\begin{theorem}[\cite{JKMT}, Theorem 3.1] Let $k$ be a field, and suppose $K_1\cap \bar{k} = k$. Fix $N\in\mathbb{N}$ and suppose 
	$\varphi^n(a)\neq \varphi^m(b$) for $a,b\in{crit}_\varphi$ and $n,m\leq N$ unless $a= b$ and $n= m$ (i.e. there are no critical orbit relations).
	Then $\Gal(K_N/k(t))=[G]^N$ where $G=\Gal(K_1/k(t))$.	
\end{theorem}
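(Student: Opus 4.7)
The natural plan is to induct on $N$, with the base case $N=1$ being the hypothesis $G=\Gal(K_1/k(t))$. For the inductive step, assume $\Gal(K_{N-1}/k(t))\cong [G]^{N-1}$.

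First I would set up the automatic embedding $\Gal(K_N/k(t))\hookrightarrow [G]^N$ coming from the preimage-tree structure: since $\varphi^N(x)-t=\varphi^{N-1}(\varphi(x))-t$, any Galois automorphism permutes the $d^N$ preimages while respecting the iterated preimage tree and therefore sits inside the iterated wreath product. Combined with the inductive hypothesis this reduces the theorem to showing
\[
|\Gal(K_N/K_{N-1})|=|G|^{d^{N-1}}.
\]
Write $\alpha_1,\dots,\alpha_{d^{N-1}}\in K_{N-1}$ for the roots of $\varphi^{N-1}(x)-t$ and set $L_i=K_{N-1}(\varphi^{-1}(\alpha_i))$, so $K_N=L_1\cdots L_{d^{N-1}}$. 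The order count then splits into proving (i) each $\Gal(L_i/K_{N-1})\cong G$, and (ii) the fields $L_i$ are pairwise linearly disjoint over $K_{N-1}$.

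Both (i) and (ii) I would handle by a uniform ramification argument. A discriminant computation (as in \cite[Lemma 3.4]{JKMT}, already used in Section~2) shows that the places of $K_{N-1}$ ramified in $L_i$ lie above primes of $k(t)$ of the form $t-\varphi^N(c)$ for $c\in\varphi_c$, while the ramification of $K_{N-1}/k(t)$ lies above $t-\varphi^m(c')$ with $c'\in\varphi_c$ and $m\le N-1$. The critical-orbit hypothesis in the theorem forces these loci to be disjoint, and for $i\ne j$ it also forces the ramification of $L_i/K_{N-1}$ and $L_j/K_{N-1}$ to sit above disjoint roots of $\varphi^{N-1}(x)-t$. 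Combined with the geometric assumption $K_1\cap\bar k=k$, this branch-point disjointness yields both claims: for (i), the fact that $K_{N-1}/k(\alpha_i)$ is unramified at every place where $L_i/K_{N-1}$ ramifies lets one identify $L_i/K_{N-1}$ with the base change of $K_1/k(t)$ along the generic substitution $t\mapsto\alpha_i$, preserving the Galois group $G$; for (ii), any nontrivial common subextension $L_i\cap L_j\supsetneq K_{N-1}$ would be a Galois extension of $K_{N-1}$ whose ramification is forced to lie in two disjoint divisors at once, contradicting the geometric hypothesis which rules out unramified constant-field extensions.

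The main obstacle I expect is the bookkeeping for (ii): converting ``disjoint branch loci'' into genuine linear disjointness demands a clean inertia-subgroup argument, proving that any common Galois subextension would require simultaneous ramification at geometrically separated places. This is essentially the content of \cite[Theorem 3.1]{JKMT}, which the present paper invokes and on which the surrounding asymptotic machinery rests.
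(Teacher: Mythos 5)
Your overall skeleton --- inducting on $N$, reducing to $|\Gal(K_N/K_{N-1})|=|G|^{d^{N-1}}$ by writing $K_N$ as the compositum of the $L_i=K_{N-1}(\varphi^{-1}(\alpha_i))$, and then proving (i) that each $\Gal(L_i/K_{N-1})\cong G$ and (ii) that the $L_i$ are linearly disjoint over $K_{N-1}$ --- is the right one and matches the strategy in \cite{JKMT}. Step (i) is essentially correct once phrased carefully: the relevant statement is that $K_{N-1}/k(\alpha_i)$ is unramified at the places of $k(\alpha_i)$ where $k(\varphi^{-1}(\alpha_i))/k(\alpha_i)$ ramifies (this is exactly what the critical-orbit hypothesis gives, since $\alpha_i-\varphi(c)$ lies over $t-\varphi^N(c)$ while $K_{N-1}/k(t)$ ramifies only over $t-\varphi^m(c')$ with $m\le N-1$), so $K_{N-1}\cap k(\varphi^{-1}(\alpha_i))$ is an unramified geometric subextension of the genus-$0$ field $k(\alpha_i)$ and hence trivial.

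The mechanism you propose for step (ii), however, does not work as written. You correctly note that a nontrivial common subextension $L_i\cap L_j$ would be everywhere unramified over $K_{N-1}$, but you then conclude triviality because ``the geometric hypothesis rules out unramified constant-field extensions.'' That inference is valid only over a function field of genus $0$: for $N\ge 2$ the field $K_{N-1}$ generically has positive genus and therefore admits nontrivial everywhere-unramified extensions that are \emph{not} constant-field extensions (already over $\bar{k}$ these are the connected \'etale covers coming from the fundamental group of the curve), and the hypothesis $K_1\cap\bar{k}=k$ says nothing about them. To close (ii) one must either push the unramification down to the rational field $k(\alpha_i)$ --- using (i) to realize every subextension of $L_i/K_{N-1}$ as the base change of some $F$ with $k(\alpha_i)\subseteq F\subseteq k(\varphi^{-1}(\alpha_i))$, then checking that unramification of $L_i\cap L_j$ over $K_{N-1}$ forces $F/k(\alpha_i)$ to be unramified at the finitely many branch primes $\alpha_i-\varphi(c)$ and at $\infty$, whence $F=k(\alpha_i)$ by genus $0$ --- or, in the spirit of \cite{odoni} and \cite{JKMT}, use a group-theoretic maximality lemma: $\Gal(K_N/K_{N-1})$ is a normal subgroup of $\Gal(K_N/k(t))$ lying in $G^{d^{N-1}}$, surjecting onto each coordinate by (i), and containing inertia subgroups over $t-\varphi^N(c)$ that are supported in a single coordinate; since $G$ is generated by conjugates of inertia (this is where the geometric hypothesis enters at the level of $k(\alpha_i)$), the normal closure of one coordinate fills it out and hence $\Gal(K_N/K_{N-1})=G^{d^{N-1}}$. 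You rightly flag (ii) as the main obstacle, but the ``unramified over $K_{N-1}$ implies trivial'' shortcut is a genuine gap.
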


\begin{theorem}[\cite{JO}, Theorem 3.1]
	Let $k$ be a field with $\ch k \neq 2$. Suppose $\Gal(K_1/k(t))\cong S_d$, there is some $a \in {crit}_\varphi$ with multiplicity one, and for all $b\in{crit}_\varphi$ and all $m\leq n\leq N$, $\varphi^n(a)\neq \varphi^m(b)$ unless $m=n$ and $b=a$. Then $\Gal(K_N/k(t))\cong [S_d]^N$.
\end{theorem}

Using these results and the results of this section we have the following.

\begin{corollary}\label{thm:conditions} Let $\varphi(x)\in \F_q(x)$. Suppose $K_{n}/\F_q(t)$ is tamely ramified. Then we have the following.
	\begin{enumerate}
		\item[(a)] If $\varphi$ satisfies the conditions in  \cite[Theorem 3.1]{JKMT} and $G=\Gal(K_1/\F_q(t))$, then for all $n\leq N$, \[\#\varphi^n(\bP^1(\F_q))=\epsilon_nq+O_d(|G|^{d^n}q^{1/2}),\]
		where  $\epsilon_n=\frac{2}{n\Phi_G''(1)}+O_d\left(\frac{\log n}{n^2}\right)$ for $n\geq 2$.
		
		\item[(b)] If  $\varphi$ satisfies the conditions in  \cite[Theorem 3.1]{JO}, then for all $n\leq N$, \[\#\varphi_{\fp}^n(\bP^1(\cO_K/\fp))=\epsilon_nq+O(d!^{d^n}q^{1/2}),\]
		where  $\epsilon_n=\frac{2}{n}+O\left(\frac{\log n}{n^2}\right)$ for $n\geq 2$.
	\end{enumerate}
	
\end{corollary}

\begin{proof}
	This follows immediately from Theorem~\ref{cor1}, Theorem \ref{imbounds}, \cite[Theorem 3.1]{JKMT}, and \cite[Theorem 3.1]{JO}.
\end{proof}

 We finish this section by using our results to bound the number of iterates that can occur before the critical orbits collide or cycle.

\begin{corollary}\label{prop:orbits}
	Let $\varphi(x)\in \F_q(x)$ have degree $d$ and suppose $\varphi(x)\in \F_q(x)$ is not a bijection. Then there is some $N$ depending on $d$ such that $\varphi^i(a)=\varphi^j(b)$ for some $i, j <\frac{Nq}{\log\log q}$ and critical points $a, b$ with $(a,i)\neq (b,j)$.
\end{corollary}

\begin{proof}
	If $\Gal(K_1^*/\bar{\F}_q(t))=G$ and $\varphi^i(a)\neq \varphi^j(b)$ for all $i, j\leq n$ unless $a=b$ and $i=j$ then by \cite[Theorem 3.1]{JKMT} we have $\Gal(K_n^*/\bar{\F}_q(t))=[G]^n$.  On the other hand, if $\Gal(K_n^*/\bar{\F}_q(t))=[G]^n$ for $n = \left\lfloor\frac{\log\log q}{\log d}\right\rfloor-2$ then Theorem \ref{cor1} implies, $\epsilon_nq=O_d\left(\frac{q}{\log\log q}\right)$. Since $n\leq\frac{\log\log  q}{\log d}-2$, 
	we have $|G|^{d^n}\leq d!^{d^n}\leq d!^{\frac{\log  q}{ d^2}}=q^{\frac{\log d!}{d^2}}<q^{1/4}$. Hence using Theorem~\ref{cor1} again, we conclude there is some constant depending on $d$ such that
	\[\#\varphi^n(\bP^1(\F_q))=\frac{Cq}{\log\log q}.\] 
	
	If $k=\#\varphi^n(\bP^1(\F_q))+1$, then for any critical point $a$, the points $\varphi^n(a), \varphi^{n+1}(a), \dots, \varphi^{n+k}(a)$ cannot all be distinct. Thus, $\varphi^i(a)=\varphi^j(a)$ for some $i<j<n+k$, where $n+k=\left\lfloor\frac{\log\log q}{\log d}\right\rfloor-1+\frac{Cq}{\log\log q}=O_d\left(\frac{q}{\log\log q}\right)$.
\end{proof}


\section{Examples}\label{Examples}

In this section we apply our results to two families of polynomials.  Theorem~\ref{example} follows from the next theorem.


\begin{theorem}Let $d>1$ and consider $\varphi(x)=ax^d+c\in \F_q(x)$, where $a,c\neq 0$. Suppose  $\varphi^n(0)\neq \varphi^m(0)$ for all $i<j\leq n$.
	 If $q\equiv 1\mod d$, then  
	\[\#\varphi^n\left(\bP^1(\F_q)\right)=\epsilon_nq+O\left(d^{d^n}\sqrt{q}\right),\text{ where } \epsilon_n=\frac{2}{(d-1)n}+O\left(\frac{\log n}{n^2}\right) \text{ for $n\geq 2$}.\] 
	 If $q\equiv 1\mod \ell$ for any $\ell|d$ with $\ell>1$, then 	\[\#\varphi^n\left(\bP^1(\F_q)\right)=\epsilon_nq+O_d\left(d^{d^n}\sqrt{q}\right), \text{ where } \epsilon_n=\frac{2}{n\Phi_{\sigma G}''(1)}+O_d\left(\frac{\log n}{n^2}\right)\text{ for $n\geq 2$,}\] 
	 where $G=\Gal(K_1/\F_q(\zeta_d,t))\cong C_d$ and $\sigma\in G$ with $\sigma|_{\F_q(\zeta_d)}(x)=x^q$. 
	Otherwise, $\varphi_q:\F_q\rightarrow \F_q$ is a bijection, so $\varphi^n(\F_q)=\F_q$ for all $n$.
	
	Further, if $a,c \in\bZ^{+}$ we can consider $\varphi(x)\in \bZ(x)$ and 
	$\varphi_q(x)=ax^d+c \in \F_q(x)$, the reduction of $\varphi_q$ of $\varphi$ modulo $q$.
	Then the first equation holds for all $\F_q$ with  $\ch \F_q>(a+c)^{\frac{d^n-1}{d-1}}$ and $q\equiv 1 \mod d$. 
\end{theorem}

\begin{proof}	
	If $q\equiv 1\mod d$, $\F_q$ contains a $d$-th root of unity so $K_1\cap \bar{\F}_q =\F_q$. Then the hypotheses of \cite[Theorem 3.1]{JKMT} hold and Corollary \ref{thm:conditions} implies
	$\#\varphi^n(\bP^1(\F_q))=\epsilon_nq+O(d^{d^n}q^{1/2}),\text{ where } \epsilon_n= \frac{2}{(d-1)n}+O\left(\frac{\log n}{n^2}\right).$
	
	If $\F_q$ contains any $\ell$-th root of unity for $\ell|d$ with $\ell>1$ then $K_1\cap\bar{\F}_q=\F_q(\zeta_d)$ let $\sigma\in \Gal(K_n/\F_q(t))$ be an element with the property $\sigma|_{\F_q(\zeta_d)}=\Frob_q$. Then by Theorem~\ref{cor1},
	$\#\varphi^n(\bP^1(\F_q))=\epsilon_nq+O_d(d^{d^n}q^{1/2})$  where $\epsilon_n=\frac{2}{n\Phi_{\sigma G}''(1)}+O_d\left(\frac{\log n}{n^2}\right)$
	and $\sigma G=(\sigma|_{K_1})\Gal(K_1/\F_q(\zeta_d,t))$.
	
	If $\F_q$ does not contain any $\ell$-th roots of unity for $\ell >1$ dividing $d$, then $\varphi:\F_q\rightarrow\F_q$ is a bijection so $\varphi^n(\F_q)=\F_q$ for all $n$.
 
	 Note, if $a,c \in\bZ^{+}$ we can consider $\varphi(x)\in \bZ(x)$ and 
	 $\varphi_q(x)=ax^d+c \in \F_q(x)$, the reduction of $\varphi_q$ of $\varphi$ modulo $q$.
	 A simple induction shows $0<\varphi^{n-1}(0)<\varphi^n(0)\leq (a+c)^{\frac{d^n-1}{d-1}}$ for all $n$.  So we have 
	 \[\#\varphi_q^n(\bP^1(\F_q))=\epsilon_nq+O(d^{d^n}q^{1/2}),\] for all $\F_q$ with  $\ch \F_q>(a+c)^{\frac{d^n-1}{d-1}}$ and $q\equiv 1 \mod d$. 
\end{proof}


	
	\begin{theorem}\label{example2} Let $d\geq 3$ and consider $\varphi(x)=(d-1)x^d+(da)x^{d-1}\in \F_q(x)$,  where $ad(d-1)\neq 0$. Suppose  $\varphi^i(-a)\neq \varphi^j(-a)$ for all $i<j\leq n$. Then
		\[\#\varphi^n\left(\bP^1(\F_q)\right)=\epsilon_nq+O\left(d!^{d^n}\sqrt{q}\right),\text{ where } \epsilon_n=\frac{2}{n}+O\left(\frac{\log n}{n^2}\right)\text{ for $n\geq 2$}.\] 
	Further, let $2\leq a \in\bZ$, $\varphi(x)=(d-1)x^d+(da)x^{d-1}\in \bZ(x)$, and $\varphi_q(x)=(d-1)x^d+(da)x^{d-1} \in \F_q(x)$, the reduction of $\varphi$ modulo $q$.
	If $\ch \F_q> (2d)^{\frac{d^{n-1}-1}{d-1}}a^{d^n}$ and  $\ch(\F_q)\nmid d$ then  
	the above equation holds.
		
	\end{theorem}

	We first prove a proposition using the following two lemmas. These are  fairly standard results from algebraic number theory stated here without proof. Results similar to the next lemma  can also be found in \cite{GTZ} and  \cite{vanderWaerden}.
	
		\begin{lemma}[\cite{JO}, Lemma 2.4] \label{inertia group}
		Let $M/K$ be a finite Galois extension with Galois group $G$. Let $H$ be a subgroup of $G$ and $L = M^H$ be the corresponding intermediate field. Let $\mathfrak{q}$ be a prime of $M$ and $\mathfrak{p} := \mathfrak{q} \cap K$. Let $X$ be the transitive $G$-set $G/H$. Then there is a bijection between the set of orbits of $X$ under the action of $D(\fq|\fp)$, the decomposition group of $\mathfrak{q}$ over $\mathfrak{p}$, and the set of extensions $\mathfrak{P}$ of $\mathfrak{p}$ to $L$ with the property: If $\mathfrak{P}$ corresponds to $Y$ then the length of $Y$ is $e(\mathfrak{P}|\mathfrak{p})f(\mathfrak{P}|\mathfrak{p})$ and $Y$ is the disjoint union of $f(\mathfrak{P}|\mathfrak{p})$ orbits of length $e(\mathfrak{P}|\mathfrak{p})$ under the action of $I(\mathfrak{q}|\mathfrak{p})$, the inertia group of $\mathfrak{q}$ over $\mathfrak{p}$.  
	\end{lemma}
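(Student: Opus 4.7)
The plan is to build the bijection in two stages: first relate primes of $L$ above $\fp$ to double cosets $H\backslash G/D$, where $D=D(\fq|\fp)$; then translate those double cosets into $D$-orbits on $X=G/H$. For the first stage, $G$ acts transitively on the primes of $M$ above $\fp$, so this set is identified with $G/D$ via $gD\leftrightarrow g\fq$, and two primes $g_1\fq,g_2\fq$ contract to the same prime of $L=M^H$ iff they lie in the same $H$-orbit on $G/D$; hence primes of $L$ above $\fp$ correspond to $H\backslash G/D$. For the second stage, the map $HgD\mapsto D\cdot g^{-1}H$ identifies these double cosets with the $D$-orbits on $X$. Composing, a $D$-orbit $Y=D\cdot g^{-1}H$ corresponds to the prime $\mathfrak{P}:=g\fq\cap L$ of $L$.

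For the length claim, fix such a $Y$ and $\mathfrak{P}$ and set $\fq'=g\fq$, so that $D(\fq'|\fp)=gDg^{-1}$ and $I(\fq'|\fp)=gIg^{-1}$ where $I=I(\fq|\fp)$. Then $D(\fq'|\mathfrak{P})=gDg^{-1}\cap H$ and $I(\fq'|\mathfrak{P})=gIg^{-1}\cap H$; conjugating by $g^{-1}$ identifies these with $D\cap g^{-1}Hg$ and $I\cap g^{-1}Hg$. Orbit-stabilizer then gives $|Y|=[D:D\cap g^{-1}Hg]$, and combining $|D|=e(\fq'|\fp)f(\fq'|\fp)$, $|D\cap g^{-1}Hg|=e(\fq'|\mathfrak{P})f(\fq'|\mathfrak{P})$, together with the multiplicativity of $e$ and $f$ in towers, collapses this index to $e(\mathfrak{P}|\fp)f(\mathfrak{P}|\fp)$.

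For the refinement into $I$-orbits, I would repeat the stabilizer computation with $I$ in place of $D$. The $I$-stabilizer of a point $dg^{-1}H\in Y$ is $I\cap(dg^{-1})H(dg^{-1})^{-1}$, and its cardinality equals $|I\cap g^{-1}Hg|$ after conjugating by $d^{-1}$, using $I\trianglelefteq D$ to see $d^{-1}Id=I$. Hence every $I$-orbit in $Y$ has size $[I:I\cap g^{-1}Hg]=|I|/|I(\fq'|\mathfrak{P})|=e(\fq'|\fp)/e(\fq'|\mathfrak{P})=e(\mathfrak{P}|\fp)$, and the number of such orbits is $|Y|/e(\mathfrak{P}|\fp)=f(\mathfrak{P}|\fp)$, giving the stated decomposition.

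The content is classical ramification theory, so the main obstacle is not mathematical but notational: every step involves a choice of left versus right coset, a choice of $\fq'=g\fq$ versus $g^{-1}\fq$, and a choice of the identification $HgD\leftrightarrow D\cdot g^{-1}H$. These conventions must be pinned down at the outset and used consistently, so that each stabilizer index translates directly into the correct product $ef$ via the standard identities $|D(\fq'|\fp)|=e(\fq'|\fp)f(\fq'|\fp)$, $D(\fq'|\fp)\cap H=D(\fq'|\mathfrak{P})$, and their inertia analogues.
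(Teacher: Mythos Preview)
The paper does not prove this lemma; it is quoted from \cite{JO} and explicitly introduced as one of ``several fairly standard results from algebraic number theory stated here without proof.'' Your argument is correct and is the classical double-coset proof: identify primes of $L$ above $\fp$ with $H\backslash G/D$, pass to $D$-orbits on $G/H$ via $HgD\leftrightarrow D\cdot g^{-1}H$, and compute orbit lengths by orbit--stabilizer together with $|D(\fq'|\fp)|=ef$, $|I(\fq'|\fp)|=e$, and multiplicativity of $e,f$ in towers. The only implicit hypothesis you rely on is that the residue field extensions are separable (so that the decomposition and inertia groups have the expected orders), which is automatic in the paper's setting of global function fields over finite fields.
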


	We will consider the case $K=k(t)$, $L = K(\theta)$ where $\theta$ is a root of $\varphi(x)-t$. Then $M=K(\varphi^{-1}(t))$. Further, since the set $G/H$ corresponds to the set of $K$ homomorphisms of $K(\theta)$ into $K(\varphi^{-1}(t))$, the elements of $G/H$ correspond to roots of $\varphi$ in $K(\varphi^{-1}(t))$. In this case, Lemma \ref{inertia group} implies there is a one-to-one correspondence between the set of orbits of the roots of $\varphi$ under the action of the decomposition group $D(\fq|\fp)$ and the set of extensions of $\fp$ to $K(\theta)$ with the property from Lemma \ref{inertia group}. 
	
	\begin{lemma}[Kummer's Theorem, see \cite{Jan}, Theorem 7.4]\label{splitting} Let $R$ be an integral domain with fraction field $K$, $L$ be a finite extension of $K$, and $R'$ be the integral closure of $R$ in $L$. Let $\fp$ be a nonzero prime ideal in $R$ and $\theta$ an element of $L$ such that the integral closure of $R_\fp$ in $L$ is $R_\fp[\theta]$. Suppose $f(x)$ is the minimal polynomial of $\theta$ over $K$, $\bar{f}(x)$ is the reduction of $f(x)$ modulo $\fp$, and $\bar{f}(x)$ factors into distinct irreducible polynomials 
		\[\bar{f}(x) = g_1(x)^{e_1}\dots g_t(x)^{e_t},\]
		then $\fp R'$ factors as $\fp R'=\fP_1^{e_1}\dots \fP_t^{e_t}$ where $f(\fP_i|\fp)=\deg(g_i)$.
	\end{lemma}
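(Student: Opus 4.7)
The plan is to reduce to the local case at $\fp$, identify $R'$ locally with a quotient of a polynomial ring, and apply the Chinese Remainder Theorem to the resulting finite-dimensional algebra over $R/\fp$. First I would observe that the factorization $\fp R' = \prod \fP_i^{a_i}$ is determined locally near $\fp$, so I may replace $R$ by $R_\fp$ and $R'$ by $R'_\fp$, which by hypothesis equals $R_\fp[\theta]$. Writing $A = R_\fp$, the substitution $x \mapsto \theta$ gives $A[\theta] \cong A[x]/(f(x))$, and reducing modulo $\fp A$ yields
\[
A[\theta]/\fp A[\theta] \;\cong\; (A/\fp A)[x]/(\bar f(x)).
\]

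Next, since the monic irreducibles $g_1, \dots, g_t$ are pairwise coprime in $(A/\fp A)[x]$, the Chinese Remainder Theorem decomposes the right-hand side as
\[
\prod_{i=1}^{t} (A/\fp A)[x]/\bigl(g_i(x)^{a_i}\bigr).
\]
Each factor is a local ring whose maximal ideal is generated by the image of $g_i$, whose residue field is $(A/\fp A)[x]/(g_i(x))$ --- a field of degree $\deg g_i$ over $A/\fp A$ --- and in which the maximal ideal has nilpotence index exactly $a_i$. Pulling back through the isomorphism, the primes of $R'$ above $\fp$ are precisely $\fP_i := (\fp, g_i(\theta))R'$ for $i = 1, \dots, t$, with $f(\fP_i \mid \fp) = \deg g_i$.

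For the ramification exponents I would localize the CRT decomposition further at $\fP_i$, which kills every factor except the $i$-th and produces $R'_{\fP_i}/\fp R'_{\fP_i} \cong (A/\fp A)[x]/(g_i(x)^{a_i})$. The nilpotence index of the maximal ideal on the right is $a_i$, which translates to $\fp R'_{\fP_i} = \fP_i^{a_i} R'_{\fP_i}$ in $R'_{\fP_i}$; assembling the local data across all $i$ yields $\fp R' = \prod_i \fP_i^{a_i}$ globally. The main technical point will be verifying that each localization $R'_{\fP_i}$ is in fact a discrete valuation ring, so that prime-power factorization of ideals there is well defined; this uses that $R'$ is integrally closed together with a Dedekind-type condition on $R$, and is the one place where a hypothesis beyond ``$R$ is an integral domain'' is genuinely needed. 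Once that is in hand, the remainder of the argument is essentially bookkeeping with the Chinese Remainder decomposition.
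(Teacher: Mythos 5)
The paper does not prove this lemma; it is cited directly from Janusz (\emph{Algebraic Number Fields}, Theorem 7.4) as a black box, so there is no in-paper proof to compare against. Your proposal is the standard ``Kummer--Dedekind'' argument and is essentially correct. A few points worth flagging. First, the identification $A[\theta] \cong A[x]/(f(x))$ does require a quick argument: $\theta$ integral forces $f$ monic in $A[x]$, and then monic division shows that any $g \in A[x]$ vanishing at $\theta$ is divisible by $f$ already in $A[x]$, not just in $K[x]$. Second, you correctly note that passing from ``nilpotence index $a_i$ in $R'_{\fP_i}/\fp R'_{\fP_i}$'' to ``$\fp R'_{\fP_i} = \fP_i^{a_i} R'_{\fP_i}$'' needs $R'_{\fP_i}$ to be a DVR, and that the hypothesis ``$R$ an integral domain'' as literally written is too weak for this. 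That is a genuine imprecision in the statement as quoted in the paper, not in your proof: the standard theorem assumes $R$ is Dedekind (or at least that $R_\fp$ is a DVR), which is automatic in the paper's only application ($R = k[t]$). Given that extra hypothesis, $R'$ is a Dedekind domain, each $R'_{\fP_i}$ is a DVR, and your bookkeeping with the Chinese Remainder decomposition goes through cleanly, including the residue-degree computation $f(\fP_i \mid \fp) = \deg g_i$.
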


	\begin{proposition}\label{prop:example2} Let $k$ be an algebraically closed field. Let $\varphi(x)=(d-1)x^d+(da)x^{d-1}\in k(x)$. Suppose $a,d,(d-1)\neq 0$. Then  $\Gal(k(\varphi^{-1}(t))/k(t))\cong S_d$.
	\end{proposition}

	\begin{proof} Using the Riemann-Hurwitz formula it is easy to see that $k(t)$ has no extensions of degree $d>1$ which are unramified at all finite primes and tamely ramified at the prime at infinity, see \cite[Lemma 2.11]{JO}. Consider the group \[I=<I(\fq|\fp)>_{\mathfrak{p} \in \mathbb{P}_{k(t)}\setminus \{\mathfrak{p}_\infty\}}\]
	which is a subgroup of $\Gal(k(\varphi^{-1}(t))/k(t))$. Since the fixed field of $I$ is an unramified extension of $k(t)$, we  have $k(\varphi^{-1}(t))^I=k(t)$ and hence $I= \Gal(k(\varphi^{-1}(t))/k(t))$. We study the ramified primes. 
		
	The critical points of $\varphi$ are $0$ and $-a$, which have multiplicities $d-1$ and $1$ respectively. From the polynomial discriminant formula, we see that the primes of $k(t)$ ramifying in $k(\varphi^{-1}(t))$ are $\varphi(0)-t$ and $\varphi(-a)-t$ see \cite{CH}. 
		
	First, consider $\fp =(\varphi(0)-t)=(t)$. Let $\theta$ be any root of $\varphi(x)-t$ then since $k$ is algebraically closed, $\theta$ is integral over $k[t]$ and the integral closure of $k[t]$ in $k(\theta)$ is $k[\theta]$. 
	Since $\varphi(x)-t \equiv \varphi(x) \equiv x^{d-1}\left((d-1)x+da\right)\mod \fp$, we have $\fp k[\theta] = \fP_1^{d-1}\fP_2$ by Lemma \ref{splitting}. Then Lemma \ref{inertia group} implies that for any $\fq$ in $k(\varphi^{-n}(t))$ lying over $\fp$, $I(\fq|\fp)$ acts transitively on $d-1$ roots of $\varphi(x)-t$ and fixes the remaining root. 
		
	Now consider $\fp'=(\varphi(-a)-t) =(\pm a^d-t)$. Since $-a$ is a critical point of multiplicity one, for any $\fq$ lying above $\fp'$, $I(\fq|\fp')$ is generated by a single transposition \cite[Corollary 2.8]{JO}.  
		
	We see from the above arguments that $I=\Gal(k(\varphi^{-1}(t))/k(t))$ is a subgroup of $S_d$ containing at least one transposition and at least one subgroup acting transitively on $d-1$ elements.
	Also, since $I$ is the Galois group and $\varphi(x)-t$ is irreducible, $I$ must be transitive. 
		
	Suppose $I'\subset I$ acts transitively on $\{\theta_1,
	\dots, \theta_{d-1}\}$ and fixes $\theta_d$. Let $(\theta_i, \theta_j)$ be any transposition in $I$. Since $I$ is transitive we can find $\sigma \in I$ such that $\sigma(\theta_i)=\theta_d$. Then $\sigma(\theta_i, \theta_j)\sigma^{-1} = (\sigma(\theta_j), \theta_d)\in I$. Denote $\sigma(\theta_j)=\theta_k$. 
	We claim that $<I',  (\theta_k, \theta_d)>\cong S_d$ and hence $I\cong S_d$. For any $\theta_\ell\in \{\theta_1,
	\dots, \theta_{d-1}\}$ we can find  $\tau \in I'$  sending $\theta_k$ to $\theta_\ell$ then $\tau(\theta_k, \theta_d)\tau^{-1} = (\theta_\ell, \theta_d)$. Thus, any transposition of the form $(\theta_\ell, \theta_d)$ is in  $<I',  (\theta_k, \theta_d)>$, it is a standard exercise in group theory to show that these transpositions generate $S_d$. 
	\end{proof}

	\begin{proof}[Proof of Theorem \ref{example2}]
		Suppose $\varphi_q^n(-a)\neq \varphi_q^m(-a)$ for any $m\leq n$. By Proposition \ref{prop:example2}, 
		$\Gal(\bar{\F}_q(\varphi^{-1}(t))/\bar{\F}_q(t))\cong S_d$. Since $S_d\cong\Gal(\bar{\F}_q(\varphi^{-1}(t))/\bar{\F}_q(t))\subseteq \Gal({\F}_q(\varphi^{-1}(t))/{\F}_q(t))$, we have  $\Gal({\F}_q(\varphi^{-1}(t))/{\F}_q(t))\cong S_d$. Hence by Corollary \ref{thm:conditions},\[\#\varphi_{q}^n(\bP^1(\F_q))=\epsilon_nq+O(d!^{d^n}q^{1/2}),\]
		where  $\epsilon_n=\frac{2}{n}+O\left(\frac{\log n}{n^2}\right)$. 
		
		Now let $a\in\bZ$ with $a\geq 2$ and consider $\varphi(x)\in \bZ(x)$.
		We claim the orbit of $-a$ is infinite. To see this, note if $d$ is odd then $\varphi(-a) = a^d$. Then since $\varphi(x)$ is a strictly increasing function on the interval $(0,\infty)$, we have $\varphi^n(-a)>\varphi^{n-1}(-a)$ for all $n$. If $d$ is even, then $\varphi(-a) = -a^d$ and $\varphi^2(-a)=(d-1)a^{d^2}-da^{d^2-d+1}=a^{d^2-d+1}((d-1)a^{d-1}-d)>0$. Again, since $\varphi(x)$ is increasing on the interval $(0,\infty)$, $\varphi^n(-a)>\varphi^{n-1}(-a)$ for all $n>2$. We can see by induction that $\varphi^n(-a)<(2d)^{\frac{d^{n-1}-1}{d-1}}a^{d^n}$ for all $n>1$.
		Fix $q$ with  $\ch \F_q> (2d)^{\frac{d^{n-1}-1}{d-1}}a^{d^n}$. Then $\varphi_q^n(-a)\neq \varphi_q^m(-a)$ for any $m\leq n$, so the result holds.
		
	\end{proof}
	


\section{Explicit Bounds on Proportions of Periodic Points}\label{BoundsPer}

In this section we prove Theorem \ref{theorem.periodicbd}.  Fix a polynomial $\varphi(x)\in\cO_K[x]$ where $\cO_K$ is the ring of integers of a number field $K$. Suppose that the critical points of $\varphi(x)$ belong to $\cO_K$. We label the set of critical points ${crit}_\varphi$. 
Suppose further that 
\begin{equation}\label{criterion2}
\varphi^n(a)\neq \varphi^m(b) \text{  for $a,b\in{crit}_\varphi$ and $n,m\in\mathbb{N}$ unless $a= b$ and $n= m$}.
\end{equation}  

We start with a definition and lemma that we will need to prove part (b) of the theorem.

\begin{definition} Let $K$ be a number field and $M_K$ the set of places of $K$. For any $\alpha\in K$ define the \textbf{height} of $\alpha$ to be $H(\alpha) =\left(\prod_{v\in M_K} \max\{|\alpha|_v,1\}^{n_v}\right)^\frac{1}{[K:\mathbb{Q}]}$ where $n_v=[K_v:\mathbb{Q}_v]$. Here $K_v$ and $\bQ_v$ denote the $v$-adic completions of $K$ and $\bQ$ respectively. Define the \textbf{logarithmic height} of $\alpha$ to be $h(\alpha)=\log H(\alpha)$ (see \cite{BG} or \cite{Sil}). 
\end{definition}

\begin{lemma}\label{lemma.distinctorbits}
	Let $\varphi(x) \in \cO_K[x]$ as above and let $d$ be the degree of $\varphi$. Then there exists a constant $B$ such that $H(\varphi^r(c))<B^{d^n}$ for all $r\leq n$ and all $c\in{crit}_\varphi$. Further, if $\fp$ is a prime of $\cO_K$ with $N(\fp)>  (2B^{2d^n})^{[K:\mathbb{Q}]}$, all of the points in the critical orbits up to the $n$-th iterate will remain distinct in $\cO_K/\fp$, where $N(\fp)=|\cO_K/\fp|$.
\end{lemma}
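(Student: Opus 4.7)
The statement has two parts, each of which reduces to standard height-theoretic estimates. For the first, my plan is to iterate the standard functorial inequality for polynomial maps: for any $\varphi\in\cO_K[x]$ of degree $d$, there is a constant $C_\varphi$, depending only on the coefficients of $\varphi$, such that $h(\varphi(\alpha))\leq d\,h(\alpha)+C_\varphi$ for every $\alpha\in\bar K$. A telescoping induction on $r$ yields
\[h(\varphi^r(c))\leq d^r h(c)+C_\varphi\cdot\frac{d^r-1}{d-1}\leq d^r\left(h(c)+\frac{C_\varphi}{d-1}\right).\]
Since $\varphi_c$ is a finite set, setting $M:=\max_{c\in\varphi_c} h(c)+C_\varphi/(d-1)+1$ and $B:=e^M$ gives $H(\varphi^r(c))\leq e^{d^r M}=B^{d^r}<B^{d^n}$ for every $r\leq n$ and every $c\in\varphi_c$.

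For the second part, I would combine two standard facts. First, for any nonzero $\gamma\in\cO_K$ one has $|N_{K/\bQ}(\gamma)|\leq H(\gamma)^{[K:\bQ]}$ (immediate from the definition of $H$, since $|\gamma|_v\leq 1$ at all finite places for an algebraic integer), so any prime $\fp$ of $\cO_K$ containing $\gamma$ satisfies $N(\fp)\leq |N_{K/\bQ}(\gamma)|\leq H(\gamma)^{[K:\bQ]}$. Second, the standard sum-of-heights inequality gives $H(\alpha-\beta)\leq 2\,H(\alpha)H(\beta)$. By the hypothesis on critical orbits, any two elements $\varphi^r(a),\varphi^s(b)$ of the critical orbits with $(a,r)\neq(b,s)$ have nonzero difference in $\cO_K$, and combining these two facts with the first part gives
\[N(\fp)\leq\bigl(2\,H(\varphi^r(a))\,H(\varphi^s(b))\bigr)^{[K:\bQ]}\leq\bigl(2B^{2d^n}\bigr)^{[K:\bQ]}\]
whenever $\fp$ identifies the reductions of $\varphi^r(a)$ and $\varphi^s(b)$. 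Consequently, if $N(\fp)>(2B^{2d^n})^{[K:\bQ]}$, no two distinct points of $\{\varphi^r(c):c\in\varphi_c,\,r\leq n\}$ can be congruent modulo $\fp$.

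No conceptual obstacle is anticipated; the only step requiring care is pinning down the constant $C_\varphi$ cleanly enough that $B$ has an effective dependence on $\varphi$ and $K$, since $B$ must absorb both $\max_{c\in\varphi_c} h(c)$ and the coefficient-dependent correction $C_\varphi/(d-1)$. An entirely elementary alternative would be to expand $\varphi^r(c)$ directly as a polynomial in $c$ and the coefficients of $\varphi$ and bound its archimedean absolute values term-by-term, but the height-theoretic route avoids this bookkeeping and produces the same exponent $d^n$.
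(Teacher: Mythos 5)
Your proof is correct and follows essentially the same route as the paper's: a telescoping height inequality $h(\varphi^r(c))\leq d^r h(c)+C_\varphi\frac{d^r-1}{d-1}$ to get $H(\varphi^r(c))\leq B^{d^n}$, then the observation that a congruence $\varphi^r(a)\equiv\varphi^s(b)\pmod{\fp}$ forces $N(\fp)$ to divide the norm of the nonzero difference, hence $N(\fp)\leq H(\varphi^r(a)-\varphi^s(b))^{[K:\bQ]}\leq\bigl(2B^{2d^n}\bigr)^{[K:\bQ]}$. The only (cosmetic) difference is that you invoke just the one-sided bound $h(\varphi(\alpha))\leq d\,h(\alpha)+C_\varphi$, whereas the paper cites Silverman's two-sided estimate $C_1H(\alpha)^d\leq H(\varphi(\alpha))\leq C_2H(\alpha)^d$ and then discards the lower bound; your version is marginally leaner but arrives at the same constant structure.
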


\begin{proof}
	By \cite[Theorem 3.11]{Sil}, for any polynomial $\varphi(x)\in K[x]$ of degree $d$, there are explicitly computable constants $C_1, C_2$ depending only on $\varphi$ such that \[C_1H(\alpha)^d\leq H(\varphi(\alpha))\leq C_2H(\alpha)^d \text{ for all } \alpha \in \bP^1(\bar{K}).\]
	We can rewrite this using the logarithmic height as
	\[\log C_1+dh(\alpha)\leq h(\varphi(\alpha))\leq \log C_2+dh(\alpha),\]
	and taking $C=\max\{|\log{C_1}|, |\log{C_2}|\}$, we see
	\[|h(\varphi(\alpha))-dh(\alpha)|\leq C \text{ for all } \alpha \in \bP^1(\bar{K}).\]
	
	Then following the arguments in the proof of \cite[Theorem 3.20]{Sil} we see 
	
	\begin{align*}
	\left|\frac{1}{d^r}h(\varphi^r(\alpha)) - h(\alpha)\right| & = \left|\sum_{i=1}^r \frac{1}{d^i}(h(\varphi^i(\alpha))-dh(\varphi^{i-1}(\alpha)))\right|\\
	&\leq \sum_{i=1}^r \left|\frac{1}{d^i}(h(\varphi^i(\alpha))-dh(\varphi^{i-1}(\alpha)))\right|\\
	&\leq \sum_{i=1}^r \frac{C}{d^i} \leq \sum_{i=1}^\infty \frac{C}{d^i}= \frac{C}{d-1}.\\
	\end{align*}
	Thus, $\frac{1}{d^r}h(\varphi^r(\alpha))\leq h(\alpha) +\frac{C}{d-1}$ for all $\alpha\in K$ and $r\leq n$. 
	
	Also since, ${crit}_\varphi$ is finite we can find a constant $D$ such that $h(c)<D$ for all $c\in {crit}_\varphi$.
	If $c\in {crit}_\varphi$, then we have $h(\varphi^r(c))\leq d^r\left(h(c)+\frac{C}{d-1}\right)\leq d^r\left(D+\frac{C}{d-1}\right)$. Let $B=e^{D+\frac{C}{d-1}}$, then $H(\varphi^r(c))\leq B^{d^n}$ for all $r\leq n$ and all $c\in{crit}_\varphi$. 
	
	Now let $\fp$ be a prime of $\cO_K$ lying over $p\in \mathbb{Z}$. Let $q=p^{\deg \fp}=|\cO_K/\fp|$. Note if $\alpha\equiv \beta \mod \fp$ for $\alpha, \beta\in \cO_K$, then $q=N(\fp)$ divides $N_{K/\bQ}(\alpha-\beta)=\prod_{\sigma} \sigma(\alpha-\beta)$, where the product is taken over the distinct embeddings of $K$ into $\bar{\bQ}$. Hence, $q\leq |N_{K/\mathbb{Q}}(\alpha-\beta)|_\infty =H(\prod_{\sigma} \sigma(\alpha-\beta))=H(\alpha-\beta)^{[K:\mathbb{Q}]}\leq (2H(\alpha)H(\beta))^{[K:\mathbb{Q}]}$ where the last inequality follows from \cite[Proposition 1.5.15]{BG}. 
	Thus, if $\varphi^r(a)\equiv \varphi^m(b)\mod \fp$ for $m,r\leq n$ and $a, b \in {crit}_\varphi$, then $N(\fp)\leq (2B^{2d^n})^{[K:\mathbb{Q}]}$.
	
	Taking $q=N(\fp)>  (2B^{2d^n})^{[K:\mathbb{Q}]}$ ensures all of the points in the critical orbits will remain distinct in $\cO_K/\fp$. It suffices to choose $n< \frac{\log(\log q-[K:\bQ]\log 2)-\log(2[K:\bQ]\log B)}{\log d}$.
\end{proof}

\begin{proof}[Proof of Theorem \ref{theorem.periodicbd}]
	
If each coset of $\Gal\left(K(\varphi^{-1}(t))/K(t)\right)/\Gal\left(\bar{K}(\varphi^{-1}(t))/\bar{K}(t)\right)$ contains at least one fixed point free element, then the same is true for the reduction to $\cO_k/\fp=\F_q$ for any $\fp$. Also by \cite[Proposition 4.1]{JKMT}, $\Gal\left(\bar{\F}_q(\varphi^{-1}(t))/\bar{\F}_q(t)\right)\cong[G]^n$ for all but finitely many primes $q$. For these primes, by Corollary \ref{thm:conditions}, 
\[\#\varphi^n(\bP^1(\F_q))=\epsilon_nq+O(|G|^{d^n}q^{1/2}),\]
where  $\epsilon_n=\frac{2}{n\Phi_G''(1)}+O_d\left(\frac{\log n}{n^2}\right)$. 
 Choose $n=\left\lfloor \frac{\log\log q}{\log d}-2\right\rfloor$, then $\frac{1}{n}=O_d\left(\frac{1}{\log\log q}\right)$ and $|G|^{d^n}<q^{1/4}$. So we can see	\[\frac{\#\Per(\varphi_\fp)}{q+1}\leq \frac{\#\varphi^n(\bP^1(\F_q))}{q+1}=O_d \left(\frac{1}{\log\log q}\right).\]
	

 Now assume the hypotheses of part (b) of the theorem. Let $A=\max\{d^2, \log B^{2[K:\bQ]}\}$. Fix a prime $\fp$ in $\cO_K$ such that $\left|\cO_k/\fp\right|=q\geq 2^{[K:\bQ]}e^{A}$. Take $n=\left\lfloor \frac{\log(\log  q-[K:\bQ]\log 2)-\log A}{\log d} \right\rfloor\geq 0$. Then $q\geq (2B^{2d^n})^{[K:\mathbb{Q}]}$ and by Lemma \ref{lemma.distinctorbits}, the critical orbits remain distinct in $\cO_K/\fp=\F_q$ up to the $n$-th iterate.  Applying \cite[Theorem 3.1]{JKMT}, we see $\Gal(K_n/{\F}_q(t))=[G]^n$ where $G=\Gal(K_1/{\F}_q(t))$.

By Theorem \ref{FPP}, 
\[\#\varphi^n(\bP^1(\F_q))<\epsilon_nq+6[K_n:\F_q(t)]ndq^{1/2},\]
where $\epsilon_n=\FPP([G]^n)$.
 Suppose $G=C_d$ or $S_d$, or $G=A_d$ and $d\neq 4$, so we have $\epsilon_n<\frac{2}{n+1}$.
Since    $n<\frac{\log\log  q -2\log d}{\log d}$, 
we have \[[K_n:\F_q(t)]d=|G|^{d^n}< d!^{d^n}< d!^{\frac{\log  q}{ d^2}}=q^{\frac{\log d!}{d^2}}<q^{1/4}.\] 
Also since  $\frac{1}{n+1}<\frac{\log d}{\log(\log  q-[K:\bQ]\log 2)-\log A}$, we have 
\[\frac{\#\Per(\varphi_q)}{q+1}\leq \frac{\#\varphi^n(\bP^1(\F_q))}{q+1}<\frac{2\log d}{\log(\log  q-[K:\bQ]\log 2)-\log A}+12q^{-1/4}.\]

\end{proof}

\end{document}